\newtheorem{thm}{Theorem}[section]
\newtheorem{lem}[thm]{Lemma}
\newtheorem{proposition}[thm]{Proposition}
\newtheorem{cor}[thm]{Corollary}
\newenvironment{defn}{\predef\rm}{\endpredef}
\newenvironment{example}{\preex\rm}{\endpreex}
\newenvironment{rem}{\prerem\rm}{\endprerem}
\newenvironment{prob}{\preprob\rm}{\endpreprob}
\newcommand{\qed}{\qquad$\Box$}
\newcommand{\sd}{\mathbin{\triangle}}
\newenvironment{proof}{\prepf\rm}{\endprepf}
\newcommand{\Ann}{\mathop{\mathrm{Ann}}}
\begin{document}

\title{Induced subgraphs of zero-divisor graphs}
\author{G. Arunkumar\\
{\small Department of Mathematics,  Indian Institute of Technology Madras,
Chennai 600032, India}\\[2mm]
Peter J. Cameron\\
{\small School of Mathematics and Statistics, University of St Andrews,
Fife, KY16 9SS, UK}\\
{\small ORCID: 0000-0003-3130-9505}\\[2mm]
T. Kavaskar\\
{\small Department of Mathematics, Central University of Tamil Nadu,
Thiruvarur 610101, India}\\[2mm]
and \\
T. Tamizh Chelvam\\
{\small Manonmaniam Sundaranar University, Tirunelveli 627 012,
Tamilnadu, India}\\
{\small ORCID: 0000-0002-1878-7847}}
\date{24 July 2021}
\maketitle

\begin{abstract}
The zero-divisor graph of a finite commutative ring with unity is the graph
whose vertex set is the set of zero-divisors in the ring, with $a$ and $b$
adjacent if $ab=0$. We show that the class of zero-divisor graphs is universal,
in the sense that every finite graph is isomorphic to an induced subgraph of
a zero-divisor graph. This remains true for various restricted classes of
rings, including boolean rings, products of fields, and local rings. But
in more restricted classes, the zero-divisor graphs do not form a universal
family. For example, the zero-divisor graph of a local ring whose maximal
ideal is principal is a threshold graph; and every threshold graph
is embeddable in the zero-divisor graph of such a ring.
More generally, we give necessary and
sufficient conditions on a non-local ring for which its zero-divisor graph to be a threshold
graph. In addition, we show that there is a countable local ring whose zero-divisor
graph embeds the \emph{Rado graph}, and hence every finite or countable graph,
as induced subgraph. Finally, we consider embeddings in related graphs such as
the $2$-dimensional dot product graph.
\end{abstract}
\section{Introduction}
In this paper, ``ring'' means ``finite commutative ring with unity'',
while ``graph'' means ``finite simple undirected graph'' (except in the
penultimate section, where finiteness will be relaxed). The
\emph{zero-divisor graph} $\Gamma(R)$ of a ring $R$ has vertices the
zero-divisors in $R$ (the non-zero elements $a$ for which there exists $b\ne0$
such that $ab=0$), with an edge $\{a,b\}$ whenever $ab=0$. The zero-divisor graphs have been extensively studied in the past \cite{Akb,AN21, AND99,And1,And2,AND20,BMT16,BMT17,Kavas,Lag1, sel}. 

In this paper, we are interested in universal graphs. Let $G$ be a graph. A graph $H$ is said to be $G$-free if no subgraph of $H$ is isomorphic to $G$. A countable $G$-free graph $H$ is weakly universal if every
countable $G$-free graph is isomorphic to a subgraph of $H$, and strongly universal if every such graph is isomorphic to an induced subgraph of $H$. 
Similarly, a class of graphs $\mathcal C$ is said to be universal if every graph is an induced subgraph of a graph in the class $\mathcal C$. Universal graphs are well-studied and there is a vast literature spreading over past few decades \cite{hjp99, lcc}. 

Universal graphs for collection of graphs satisfying certain forbidden conditions also explored in the past \cite{av13,cs07,ks95}. In \cite{cg83}, the search for a graph $G$ on $n$ vertices and with minimum number of edges in which every tree on $n$ vertices is isomorphic to a spanning tree of $G$ is carried out.

Here we address the question: Which finite
graphs are induced subgraphs of the zero-divisor graph of some ring? The
answer turns out to be ``all of them'', but there are interesting questions
still open when we restrict the class of rings or the class of graphs.

\section{Zero divisor graphs are universal}
In this section, we show that the zero divisor graphs associated to various classes of rings are universal.

\subsection{The zero divisor graphs of Boolean rings are universal}
In this subsection we show that the intersection graphs are universal. Indeed we prove a stronger result: namely, every graph $G$ can be identified as an intersection graph for a natural choice of sets associated to $G$. We need this result in a couple of proofs in later sections.

\begin{defn} 
Let $X$ be a family of subsets of a set $A$. 
The \emph{intersection graph} on $X$, denoted by $\mathcal G(X)$, has
vertices the sets in the family, two vertices joined if they have non-empty
intersection.
\end{defn}
The following proposition is well known. 
\begin{proposition}\label{intersection}
Let $G$ be a finite graph. Then $G$ can be identified as an intersection graph for a natural choice of sets associated to $G$.
\end{proposition}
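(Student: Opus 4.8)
The plan is to exhibit an explicit family of sets, indexed by the vertices of $G$, whose intersection graph is isomorphic to $G$. First I would take the ground set to be $A=V(G)\cup E(G)$, the disjoint union of the vertex set and the edge set of $G$. For each vertex $v$, I would associate the set
\[
S_v=\{v\}\cup\{e\in E(G):v\in e\},
\]
that is, $v$ together with all edges incident with $v$, and then set $X=\{S_v:v\in V(G)\}$.

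Next I would check that $v\mapsto S_v$ is a bijection from $V(G)$ onto $X$. Injectivity is immediate: the only vertex of $G$ lying in $S_v$ is $v$ itself, so $S_v=S_w$ forces $v=w$. (Including the singleton $\{v\}$ in $S_v$ is exactly what makes this work when $v$ is isolated, and more generally it guarantees the sets are pairwise distinct.) Surjectivity holds by construction.

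Then I would verify that this bijection is a graph isomorphism, i.e.\ that $v$ and $w$ are adjacent in $G$ if and only if $S_v\cap S_w\ne\emptyset$. If $\{v,w\}\in E(G)$, then the edge $\{v,w\}$ lies in both $S_v$ and $S_w$, so the sets meet. Conversely, suppose $x\in S_v\cap S_w$ with $v\ne w$. If $x$ is a vertex then $x=v$ and $x=w$, a contradiction; so $x$ is an edge incident with both $v$ and $w$, which forces $x=\{v,w\}\in E(G)$. Hence $\mathcal G(X)\cong G$, and the family $\{S_v\}$ is the promised natural choice of sets.

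There is no real obstacle here; the construction is classical and the only point requiring a little care is the treatment of isolated vertices and the need for the indexing map to be a bijection (so that we obtain an isomorphism of graphs, not merely an embedding), which is handled by padding each set with its own vertex. \qed
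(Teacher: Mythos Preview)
Your proof is correct and follows essentially the same idea as the paper's: represent each vertex by its set of incident edges, so that two such sets meet precisely when the vertices are adjacent. The only difference is that the paper uses $A_v=\{e:v\in e\}$ and must then treat isolated vertices and isolated edges separately (since $A_v$ would be empty, respectively fail to be injective), whereas your device of tagging each set with the vertex itself handles all cases uniformly and is arguably cleaner.
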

\begin{proof}
Let $G$ be a finite graph with vertex set $E$. Without loss of generality we assume that $G$ has no isolated vertices and isolated edges [See Remark \ref{rem1}]. The construction is simple. We represent a vertex $v$ of $G$ by the set $A_v$ of all edges of $G$ containing $v$. Then

$$
A_v \cap A_w = 	
\begin{cases}
\{e\} \text{ if $u$ is adjacent to $v$ by the edge $e$}\\
\emptyset \text{ otherwise}
\end{cases}
$$
Moreover, $A_v\ne A_w$ for all $v\ne w$, since (in the absence of isolated vertices and edges) two vertices cannot be adjacent with the same set of edges. Now, it is immediate that the graph $G$ and the intersection graph $\mathcal G(X)$ are isomorphic where $X = \{A_v : v \in V(G)\}$. \qed
\end{proof}

\begin{rem}\label{rem1}\rm
 If there are isolated vertices in $G$, they can be represented by non-empty
sets disjoint from the graph
obtained from the non-isolated vertices. If there are isolated edges in  $G$
we represent the vertices of such an edge by two distinct but intersecting sets
disjoint from the sets representing the rest of the graph. 
\end{rem}

Let $X$ be a finite set. The \emph{Boolean ring} on $X$ has as its elements
the subsets of $X$; addition is symmetric difference, and multiplication is
intersection. So the empty set is the zero element and $ab=0$ if and only if $a$ and $b$
are disjoint.

\begin{lem}\label{lemmaBoolean}
	Let $\Gamma(R)$ be the zero divisor graph of a Boolean ring $R$. A graph $G$ is an induced subgraph of $\Gamma(R)$ if, and only if, the complement graph of $G$ is isomorphic to an intersection graph $\mathcal G(X)$ for some collection of sets $X$.
\end{lem}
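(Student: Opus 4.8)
The plan is to prove both implications by unwinding the elementary dictionary that the Boolean ring structure provides. Recall that if $R$ is the Boolean ring on a finite set $Y$, then its zero element is $\emptyset$, its identity is $Y$, and $ab = a\cap b$ for all $a,b\subseteq Y$; hence $ab=0$ exactly when $a$ and $b$ are disjoint. Consequently $a$ is a zero-divisor precisely when it is a \emph{nonempty proper} subset of $Y$ (for then $Y\setminus a$ is a nonzero annihilator, whereas $\emptyset$ and $Y$ annihilate nothing nonzero), and in $\Gamma(R)$ two such subsets are adjacent iff they are disjoint. Equivalently, on any set of vertices the complement $\overline{\Gamma(R)}$ is literally an intersection graph: adjacency there means ``the two sets meet''. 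Each direction of the lemma is then just a careful transcription of this remark.

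For the forward direction, suppose $G$ is isomorphic to the induced subgraph of $\Gamma(R)$ on a vertex set $V$, where $R$ is the Boolean ring on $Y$. Every element of $V$ is a nonempty proper subset of $Y$, these subsets are pairwise distinct, and two of them are joined in $G$ iff they are disjoint. Taking $X$ to be this family of subsets, two members of $X$ are joined in $\mathcal{G}(X)$ iff they intersect, i.e.\ iff the corresponding vertices are non-adjacent in $G$; since the complement of an induced subgraph is the induced subgraph of the complement, this gives $\overline{G}\cong\mathcal{G}(X)$.

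For the converse, I would start from an isomorphism $\overline{G}\cong\mathcal{G}(X)$, where $X=\{S_v:v\in V(G)\}$ is a family of subsets of a finite set $Y$ and $S_v\cap S_w\neq\emptyset$ iff $v,w$ are adjacent in $\overline{G}$ (equivalently, non-adjacent in $G$). I first ``clean up'' $X$: pick pairwise distinct new elements $y_v$ ($v\in V(G)$) outside $Y$ and put $T_v=S_v\cup\{y_v\}$. This leaves every pairwise intersection unchanged, $T_v\cap T_w=S_v\cap S_w$ for $v\neq w$, but now each $T_v$ is nonempty and the $T_v$ are pairwise distinct. Finally adjoin one extra point $\ast$ and set $Z=Y\cup\{y_v:v\in V(G)\}\cup\{\ast\}$, so that each $T_v$ is a nonempty \emph{proper} subset of $Z$, hence a zero-divisor of the Boolean ring $R$ on $Z$. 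By construction $v$ is adjacent to $w$ in $G$ iff $T_v\cap T_w=\emptyset$ iff $T_v,T_w$ are adjacent in $\Gamma(R)$, so $G$ is isomorphic to the induced subgraph of $\Gamma(R)$ on $\{T_v:v\in V(G)\}$.

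I do not expect any serious obstacle: the argument is essentially bookkeeping. The only points needing care are the three ``degenerate'' features an abstract family $X$ may have — a set of $X$ could be empty, two of them could coincide, or one could fill the whole ambient set — each of which would violate the requirement that the $T_v$ be honest, distinct zero-divisors. Adjoining private elements $y_v$ repairs emptiness and coincidences simultaneously while preserving the intersection pattern, and adjoining a single further point $\ast$ secures properness. (Note that, together with Proposition~\ref{intersection}, which shows that every finite graph \emph{is} an intersection graph, this lemma immediately yields that the zero-divisor graphs of Boolean rings form a universal class.)
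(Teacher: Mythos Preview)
Your proof is correct and follows essentially the same route as the paper's: both directions rest on the observation that in a Boolean ring multiplication is intersection, so adjacency in $\Gamma(R)$ is disjointness and adjacency in the complement is nonempty intersection. Your converse is in fact more careful than the paper's, which does not explicitly address the possibility that some $S_v$ is empty, repeated, or equal to the full ambient set; your adjunction of the points $y_v$ and $\ast$ handles all three degeneracies cleanly.
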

\begin{proof}
Suppose $G$ is an induced subgraph of the zero divisor graph of a Boolean ring $R$. Now $R$ is a subring of $(P(X),\sd,\cap)$
for some $X$. Therefore the vertices of $G$ are elements of $P(X)$ and two of them are adjacent if their intersection is empty. In otherwords, two of them are adjecent in the complement graph of $G$ if their intersection is non-empty. This shows that the complement graph of $G$ is an intersection graph. 
	
Conversely, consider an arbitrary intersection graph $G:=\mathcal G(X)$ for some collection of subsets $X$ of a set $A$. Then the vertices of $G$ are subsets of $A$ and two of them are adjacent if their intersection is non-empty. In otherwords, two of them are adjecent in the complement graph of $G$ if their intersection is empty. This shows that the complement graph of $G$ is an induced subgraph of the zero divisor graph of the Boolean ring $(P(A),\sd,\cap)$.
\qed
\end{proof}

\begin{thm}\label{theoremboolean}
The zero divisor graphs of Boolean rings are universal. That is, every finite
graph is an induced subgraph of the zero-divisor graph of a Boolean ring.
\end{thm}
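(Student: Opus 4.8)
The plan is to combine the two results already established in this subsection. Given an arbitrary finite graph $G$, the key observation is that its complement $\overline G$ is also a finite graph, so Proposition~\ref{intersection} applies to $\overline G$: there is a set $A$ and a family $X$ of subsets of $A$ such that $\overline G\cong\mathcal G(X)$. Concretely, one takes $A$ to be the edge set of $\overline G$ and represents each vertex $v$ by the set $A_v$ of edges of $\overline G$ incident with $v$ (with the adjustments of Remark~\ref{rem1} for any isolated vertices or isolated edges of $\overline G$).

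Next I would invoke Lemma~\ref{lemmaBoolean}. Since $\overline G$ is isomorphic to the intersection graph $\mathcal G(X)$ for a collection of sets, the lemma tells us immediately that $G$ itself is isomorphic to an induced subgraph of $\Gamma(R)$, where $R$ is the Boolean ring on the set $A$, i.e.\ $R=(P(A),\sd,\cap)$. Because $\overline G$ has only finitely many edges, $A$ is finite and hence $R$ is a finite commutative ring with unity, so this is a legitimate instance of the construction. This completes the argument.

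The proof is therefore essentially a two-line deduction, and there is no real obstacle left once Proposition~\ref{intersection} and Lemma~\ref{lemmaBoolean} are in hand; the only point requiring a moment's care is the bookkeeping around degenerate features of $\overline G$ (isolated vertices and isolated edges), which is exactly what Remark~\ref{rem1} is there to handle, and the verification that the resulting Boolean ring is finite. I would state the theorem's proof in this order: first pass to the complement, then apply Proposition~\ref{intersection}, then apply Lemma~\ref{lemmaBoolean}, and finally note the finiteness of $R$.
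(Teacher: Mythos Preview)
Your argument is correct and matches the paper's proof essentially line for line: reduce via Lemma~\ref{lemmaBoolean} to showing the complement of $G$ is an intersection graph, then cite Proposition~\ref{intersection}. The additional remarks you make about Remark~\ref{rem1} and the finiteness of $R$ are sound but not strictly needed beyond what those earlier results already handle.
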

\begin{proof}
	Given an arbitrary finite graph $G$, by Lemma \ref{lemmaBoolean}, it is enough to show that the complement graph of $G$ is an intersection graph. But this claim follows already from Proposition \ref{intersection}. This completes the proof. \qed
\end{proof}

\begin{cor} For every finite graph $G$, there exists positive integer $k$ such that $G$ is an induced subgraph of the zero-divisor graph of the ring $\mathbb{Z}_2\times\mathbb{Z}_2\times \ldots \times \mathbb{Z}_2$ ($k$-times).
\end{cor}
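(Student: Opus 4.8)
The plan is to notice that the Boolean ring constructed in the proof of Theorem~\ref{theoremboolean} is, up to isomorphism, already a ring of the stated form. Indeed, for any finite set $X$ with $|X|=k$, sending a subset $S\subseteq X$ to its characteristic vector gives a ring isomorphism from $(P(X),\sd,\cap)$ onto $\mathbb{Z}_2\times\mathbb{Z}_2\times\cdots\times\mathbb{Z}_2$ ($k$ times): symmetric difference corresponds to coordinatewise addition modulo $2$, intersection to coordinatewise multiplication, and $\emptyset$ to the zero element. Consequently this isomorphism carries $\Gamma(P(X),\sd,\cap)$ onto $\Gamma(\mathbb{Z}_2^k)$, so it is enough to realise $G$ as an induced subgraph of $\Gamma(P(X),\sd,\cap)$ for some finite set $X$.

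But this is exactly what the proof of Theorem~\ref{theoremboolean} does. Given the finite graph $G$, that proof (via Proposition~\ref{intersection}, Remark~\ref{rem1} and Lemma~\ref{lemmaBoolean}) produces a finite set $A$ together with an induced embedding of $G$ into $\Gamma(P(A),\sd,\cap)$; concretely, $A$ can be taken to be the edge set of $\overline{G}$, augmented by the auxiliary points that Remark~\ref{rem1} uses to deal with isolated vertices and isolated edges. Setting $k=|A|$ and applying the isomorphism of the previous paragraph, $G$ becomes an induced subgraph of $\Gamma(\mathbb{Z}_2^k)$, as required.

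I do not expect any genuine difficulty here; the argument is a direct unwinding of the proof of Theorem~\ref{theoremboolean}. The only point deserving a word of care is the size of $k$: a careful look at the construction shows that $k$ may be chosen of order $n^2$ where $n=|V(G)|$ (roughly the $\binom{n}{2}$ potential edges of $\overline{G}$, plus $O(n)$ extra points to cover Remark~\ref{rem1} and to ensure that no vertex is represented by the top element of $P(A)$, which is not a zero-divisor), but for the statement as given the mere finiteness of $A$ is all that is needed.
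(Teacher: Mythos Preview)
Your proposal is correct and follows essentially the same route as the paper: the paper's proof simply observes that every finite Boolean ring is isomorphic to $\mathbb{Z}_2\times\cdots\times\mathbb{Z}_2$ and then invokes Theorem~\ref{theoremboolean}. Your explicit description of the isomorphism via characteristic vectors and your remarks on the size of $k$ go a bit beyond what the paper records, but the underlying argument is the same.
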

\begin{proof}
Note that every finite Boolean ring is isomorphic to $\mathbb{Z}_2\times\mathbb{Z}_2\times \ldots \times \mathbb{Z}_2$, for some positive integer $k$ and hence the result follows from Theorem \ref{theoremboolean}.
\end{proof}
\subsection{The zero divisor graphs of ring of integers modulo $n$ and the reduced rings are universal}
\begin{lem}
Let $X=\{1,2,\dots,m\}$. Then the zero divisor graph of the Boolean ring 
$(P(X),\sd,\cap)$
is an induced subgraph of the zero divisor graph of the ring $\mathbb{Z}_n$, the integers modulo $n$, where $n = p_1 p_2 \cdots p_m$ is a square-free integer
with $m$ prime divisors.
\end{lem}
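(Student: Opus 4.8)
The plan is to invoke the Chinese Remainder Theorem to identify $\mathbb{Z}_n\cong\mathbb{Z}_{p_1}\times\cdots\times\mathbb{Z}_{p_m}$, a product of $m$ fields, and to note that $(P(X),\sd,\cap)\cong\mathbb{Z}_2\times\cdots\times\mathbb{Z}_2$ ($m$ times). The embedding will then be the ``coordinatewise'' map that keeps the zero of $\mathbb{Z}_2$ as the zero of $\mathbb{Z}_{p_i}$ and sends the identity of $\mathbb{Z}_2$ to the identity of $\mathbb{Z}_{p_i}$. Concretely, for $A\subseteq X$ I would define $\phi(A)\in\mathbb{Z}_n$ to be the element whose image in $\prod_i\mathbb{Z}_{p_i}$ has $i$th coordinate equal to $1\in\mathbb{Z}_{p_i}$ when $i\in A$ and $0\in\mathbb{Z}_{p_i}$ when $i\notin A$; equivalently $\phi(A)\equiv\prod_{i\in X\setminus A}p_i\pmod n$. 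The claim to prove is that $\phi$ restricts to an isomorphism from $\Gamma\big((P(X),\sd,\cap)\big)$ onto an induced subgraph of $\Gamma(\mathbb{Z}_n)$.

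First I would check that $\phi$ sends vertices to vertices. Here $\phi(\emptyset)=\prod_{i\in X}p_i\equiv 0$ and $\phi(X)=1$ (the empty product), neither a zero-divisor of $\mathbb{Z}_n$; but $\emptyset$ is the zero of $P(X)$ and $X$ is its multiplicative identity, so neither is a vertex of $\Gamma(P(X))$ either. For every remaining subset, i.e. $\emptyset\subsetneq A\subsetneq X$, the integer $\phi(A)=\prod_{i\notin A}p_i$ is a divisor of $n$ with $1<\phi(A)<n$, hence a zero-divisor in $\mathbb{Z}_n$, while $A$ is a zero-divisor in $P(X)$ since it annihilates $X\setminus A\neq\emptyset$. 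Injectivity is immediate, as $A$ is recovered from $\phi(A)$ by $A=\{i\in X:p_i\nmid\phi(A)\}$. The core step is adjacency: for each prime $p_k$, the $p_k$-adic valuation of $\phi(A)\phi(B)$ equals $1$ if $k\notin A$ and $2$ if $k\notin A\cap(X\setminus B)$ — more simply, it is $1$ unless $k\in A\cap B$, in which case it is $0$ — so $v_{p_k}\big(\phi(A)\phi(B)\big)\ge 1=v_{p_k}(n)$ precisely when $k\notin A\cap B$. Therefore $n\mid\phi(A)\phi(B)$, i.e. $\phi(A)\phi(B)=0$ in $\mathbb{Z}_n$, if and only if $A\cap B=\emptyset$, i.e. $A\cap B=0$ in the Boolean ring. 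Together with injectivity, this makes $\phi$ an isomorphism onto the subgraph of $\Gamma(\mathbb{Z}_n)$ induced on $\{\phi(A):\emptyset\subsetneq A\subsetneq X\}$.

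There is no deep obstacle in this argument; the only points that genuinely require attention are the implicit ``complementation'' in the construction — disjointness of $A$ and $B$ in $P(X)$ must translate into divisibility of $\phi(A)\phi(B)$ by every $p_i$, which forces the $i$th coordinate of $\phi(A)$ to be nonzero exactly when $i\in A$ rather than when $i\notin A$ — and the bookkeeping that confirms the two exceptional subsets $\emptyset$ and $X$ are non-vertices on both sides, so that $\phi$ is a well-defined bijection between the two vertex sets in question.
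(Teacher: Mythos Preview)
Your proof is correct and matches the paper's: the map $\phi(A)=\prod_{i\notin A}p_i$ is precisely the paper's $A\mapsto n_{A^c}$, and the adjacency check $A\cap B=\emptyset\iff n\mid n_{A^c}n_{B^c}$ is exactly the equivalence the paper records (you simply add the CRT framing and spell out the vertex/injectivity verifications that the paper leaves implicit). One minor slip worth noting: the element of $\prod_i\mathbb{Z}_{p_i}$ with $i$th coordinate $1$ for $i\in A$ and $0$ otherwise is not literally equal to $\prod_{i\notin A}p_i\bmod n$---only its zero/nonzero pattern agrees---but since your actual argument uses the product description throughout, this does not affect correctness.
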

\begin{proof}
Let $A$ and $B$ be two proper subsets of $X$. Then the following statements are
equivalent:
\begin{itemize}
\item $A$ and $B$ are adjacent in the the zero divisor graph of the Boolean
ring $(P(X),\sd,\cap)$;
\item $A$ and $B$ are disjoint;
\item $A^c \cup B^c = X$;
\item the product $n_{A^c} n_{B^c}$ is divisible by $n$, where
$n_A = \prod_{i \in A}p_i$.
\end{itemize}
Therefore the map which sends $A$ to $n_{A^c}$ defines an injective graph homomorphism of the zero divisor graph of the Boolean ring $(P(X),\sd,\cap)$
and the zero divisor graph of the ring $\mathbb Z_n$, the integers modulo $n$.
\qed
\end{proof}

\begin{thm}
Every finite graph is an induced subgraph of the zero-divisor graph of a
product of distinct finite fields.
\end{thm}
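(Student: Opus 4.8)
The plan is to chain together the results already proved in this section. Let $G$ be an arbitrary finite graph. First I would invoke Theorem~\ref{theoremboolean}: by its proof, via Proposition~\ref{intersection} and Lemma~\ref{lemmaBoolean} (and Remark~\ref{rem1} to absorb any isolated vertices or edges of the complement of $G$), the graph $G$ is isomorphic to an induced subgraph of the zero-divisor graph of the Boolean ring $(P(X),\sd,\cap)$ where $X=\{1,2,\dots,m\}$ for some positive integer $m$ (one may take $m$ to be essentially the number of edges of the complement of $G$).

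Next I would choose $m$ distinct primes $p_1,\dots,p_m$ and set $n=p_1p_2\cdots p_m$, a square-free integer with exactly $m$ prime divisors. The Lemma immediately preceding this theorem then provides an embedding of $\Gamma\big((P(X),\sd,\cap)\big)$ into $\Gamma(\mathbb{Z}_n)$, and this embedding is \emph{induced}: the list of equivalences in its proof shows that two subsets are adjacent in the Boolean zero-divisor graph precisely when their images are adjacent in $\Gamma(\mathbb{Z}_n)$. Since ``is an induced subgraph of'' is a transitive relation, $G$ is an induced subgraph of $\Gamma(\mathbb{Z}_n)$.

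Finally I would apply the Chinese Remainder Theorem to get $\mathbb{Z}_n\cong\mathbb{Z}_{p_1}\times\cdots\times\mathbb{Z}_{p_m}$. A ring isomorphism carries zero-divisors to zero-divisors and preserves the relation $ab=0$, so it induces an isomorphism of zero-divisor graphs; hence $G$ is an induced subgraph of $\Gamma(\mathbb{Z}_{p_1}\times\cdots\times\mathbb{Z}_{p_m})$. Each factor $\mathbb{Z}_{p_i}$ is a finite field, and the $m$ factors are pairwise non-isomorphic since the primes $p_i$ are distinct (the fields have distinct orders). This exhibits $G$ as an induced subgraph of the zero-divisor graph of a product of distinct finite fields.

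I do not expect a genuine obstacle: all the substantive work has been done in the earlier lemmas, and what remains is bookkeeping. The two points that warrant a moment's attention are (i) checking that the preceding Lemma yields an \emph{induced} subgraph and not merely a subgraph --- which is guaranteed by the ``if and only if'' chain in its proof --- and (ii) recording the Chinese Remainder Theorem identification together with the observation that distinct primes yield pairwise non-isomorphic fields, which is exactly what upgrades the conclusion from a power of $\mathbb{F}_2$ (as in the Boolean case) to a product of \emph{distinct} fields.
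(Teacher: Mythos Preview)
Your proposal is correct and follows essentially the same route as the paper: embed $G$ in the zero-divisor graph of a Boolean ring via Theorem~\ref{theoremboolean}, pass to $\Gamma(\mathbb{Z}_n)$ for square-free $n$ via the preceding Lemma, and then invoke the Chinese Remainder Theorem to rewrite $\mathbb{Z}_n$ as a product of distinct prime fields. Your write-up is in fact more careful than the paper's one-line proof, since you explicitly record why the embedding is induced and why the fields are pairwise non-isomorphic.
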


\begin{proof}
By the Chinese Remainder Theorem, if $n=p_1p_2\cdots p_m$ where $p_1$, $p_2$,
\dots, $p_m$ are distinct primes, the product of the fields $\mathbb{Z}_{p_i}$
is isomorphic to $\mathbb{Z}_n$, so the result follows from the preceding
theorem.\qed
\end{proof}

\begin{cor}
The class of zero-divisor graphs of the rings of integers modulo $n$ is universal. Further, we can even restrict the $n$ to be squarefree.
\end{cor}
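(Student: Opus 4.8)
The plan is simply to compose the induced-subgraph embeddings already constructed. Start with an arbitrary finite graph $G$. By Theorem~\ref{theoremboolean}, $G$ is an induced subgraph of the zero-divisor graph $\Gamma(R)$ of some Boolean ring $R$; and since every finite Boolean ring is a power-set ring, we may take $R=(P(X),\sd,\cap)$ for a finite set $X$, say with $|X|=m$.

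Next, pick $m$ distinct primes $p_1,\dots,p_m$ and set $n=p_1p_2\cdots p_m$, which is squarefree. By the lemma relating $(P(X),\sd,\cap)$ to $\mathbb{Z}_n$, the zero-divisor graph of $(P(X),\sd,\cap)$ is an induced subgraph of $\Gamma(\mathbb{Z}_n)$. Since the relation ``is an induced subgraph of'' is transitive, $G$ is an induced subgraph of $\Gamma(\mathbb{Z}_n)$. As the integer $n$ obtained this way is already squarefree, this proves both assertions of the corollary at once.

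The only point meriting a word of care is that the embedding supplied by that lemma is genuinely an \emph{induced} embedding and not merely a subgraph embedding; but the lemma's proof establishes a chain of equivalences ($A$ and $B$ adjacent in the Boolean ring $\iff$ $A,B$ disjoint $\iff$ $A^c\cup B^c=X$ $\iff$ $n\mid n_{A^c}n_{B^c}$), so non-adjacency is preserved as well as adjacency, and transitivity of induced containment then applies without any extra work. I expect no real obstacle here — the statement is a straightforward consequence of the results established above, recorded for emphasis.
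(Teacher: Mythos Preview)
Your argument is correct and is precisely the composition the paper intends: the corollary is stated without a separate proof because it follows immediately from Theorem~\ref{theoremboolean} together with the preceding lemma embedding $\Gamma(P(X),\sd,\cap)$ into $\Gamma(\mathbb{Z}_n)$ for squarefree $n$. Your extra remark verifying that the lemma's embedding is induced (via the chain of equivalences) is a reasonable point of care, but otherwise there is nothing to add.
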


\subsection{The zero divisor graphs of local rings are universal}

A \emph{local ring} is a ring $R$ with a unique maximal ideal $M$.
Any non-zero element of $M$ is a zero-divisor, and every element of
$R\setminus M$ is a unit. 
\begin{lem}
Let $R$ be a local ring with a unique maximal ideal $M$. Then there exists $r$ such that $M^r=\{0\}$.
\end{lem}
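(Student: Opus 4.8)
The plan is to show that $M$ is a \emph{nilpotent} ideal in two stages: first that every individual element of $M$ is nilpotent, and then that a finitely generated ideal all of whose generators are nilpotent is itself nilpotent.

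For the first stage, take any $x\in M$. Since $R$ is finite, the powers $x,x^2,x^3,\dots$ cannot all be distinct, so $x^i=x^j$ for some $i<j$; equivalently $x^i(1-x^{j-i})=0$. Because $x\in M$ we have $x^{j-i}\in M$, hence $1-x^{j-i}\notin M$ (otherwise $1\in M$), so $1-x^{j-i}$ is a unit by the property of local rings recalled just above the statement. Multiplying by its inverse gives $x^i=0$, so $x$ is nilpotent. (Equivalently, one may observe that in a finite ring every prime ideal is maximal, so $M$ is the unique prime ideal of $R$ and therefore coincides with the nilradical.)

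For the second stage, note that $M$, being a subset of the finite ring $R$, is a finitely generated ideal, say $M=(x_1,\dots,x_k)$, and pick $n_i$ with $x_i^{n_i}=0$. Set $r=1+\sum_{i=1}^k(n_i-1)$. The ideal $M^r$ is generated by all products $y_1y_2\cdots y_r$ with each $y_\ell\in M$; writing each $y_\ell$ as an $R$-linear combination of $x_1,\dots,x_k$ and multiplying out, every such product becomes an $R$-linear combination of monomials $x_1^{a_1}\cdots x_k^{a_k}$ with $a_1+\cdots+a_k=r$. By the pigeonhole principle and the choice of $r$, some exponent satisfies $a_i\ge n_i$, so each such monomial is $0$; hence $M^r=\{0\}$.

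I do not expect a serious obstacle: the only point needing care is the bookkeeping in the second stage (that a product of $r$ elements of $M$ really does reduce to a combination of degree-$r$ monomials in a fixed generating set, and that the right quantity to bound is $\sum(n_i-1)$ rather than $\max_i n_i$). If one prefers to avoid multi-indices, an alternative to the second stage is to use the descending chain $M\supseteq M^2\supseteq\cdots$, which stabilizes since $R$ is finite, say $M^r=M^{r+1}$, and then apply Nakayama's lemma to the finitely generated module $M^r$, on which $M$ acts with $M\cdot M^r=M^r$, to conclude $M^r=\{0\}$.
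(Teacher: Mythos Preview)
Your proof is correct. The paper's own proof is a single line: since $R$ is finite it is Artinian, and the maximal ideal of a local Artinian ring is nilpotent. You instead give an elementary, self-contained argument: first you show each $x\in M$ is nilpotent directly from finiteness and the fact that $1-x^{j-i}$ is a unit, and then you use a pigeonhole/multi-index count (or, alternatively, the descending chain plus Nakayama) to pass from elementwise nilpotence to nilpotence of the ideal. What your approach buys is that it never appeals to the theorem ``Jacobson radical of an Artinian ring is nilpotent'' as a black box, so a reader unfamiliar with that result can follow it; what the paper's approach buys is brevity and the signal that this is a standard fact. Both are perfectly valid for this context.
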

\begin{proof}
Since $R$ is a finite local ring, it is a local Artinian ring and hence $M$ is nilpotent.  \qed
\end{proof}

From the above lemma, we observe that, if $a\in M^s$ and $b\in M^t$ with $s+t\ge r$, then $ab=0$ and hence $a$ and
$b$ are adjacent in the zero-divisor graph.
This observation might suggest that the zero-divisor graph is a threshold graph (these graphs are defined in the next section), but we show that this is not so:
The problem is complicated by the fact that the converse of this observation is false in general.

In the following theorem, we show that in fact the zero-divisor graphs of local rings are universal.

\begin{thm}\label{t:loc_universal}
For every finite graph $G$, there is a finite commutative local ring $R$ with
unity such that $G$ is an induced subgraph of the zero-divisor
graph of $R$.
\end{thm}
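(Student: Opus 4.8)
The plan is to realize an arbitrary finite graph $G$ inside the zero-divisor graph of a single local ring, built as a quotient of a polynomial ring over a field. Given $G$ with vertex set $\{v_1,\dots,v_n\}$, I would introduce an indeterminate $x_i$ for each vertex and work in $S=k[x_1,\dots,x_n]$ for a field $k$ (say $\mathbb{F}_2$ to keep things finite). The idea is to kill products in a way that encodes \emph{non}-adjacency: form the ideal $I$ generated by $\mathfrak{m}^3$ (where $\mathfrak m=(x_1,\dots,x_n)$) together with all monomials $x_ix_j$ for which $\{v_i,v_j\}$ is \emph{not} an edge of $G$ (including squares $x_i^2$, since we don't want a loop). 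Set $R=S/I$. Then $R$ is a finite local ring with maximal ideal $M=\mathfrak m/I$, and $M^3=0$, so $R$ is Artinian local as required.

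\textbf{The key computation} is then to check that in $R$ the images $\bar x_1,\dots,\bar x_n$ are all distinct nonzero zero-divisors, and that $\bar x_i\bar x_j=0$ in $R$ if and only if $\{v_i,v_j\}$ is a non-edge of $G$ (for $i\neq j$). One direction is immediate from the construction. For the other direction I need that when $\{v_i,v_j\}$ \emph{is} an edge, the monomial $x_ix_j$ does not lie in $I$; this is the crucial point and follows because $I$ is a monomial ideal, so membership is decided monomial-by-monomial — $x_ix_j$ is a degree-$2$ monomial not among the chosen "non-edge" generators and not in $\mathfrak m^3$. Similarly $\bar x_i\neq 0$ because $x_i\notin I$ (no degree-$1$ generators), and $\bar x_i\neq\bar x_j$ because $x_i-x_j\notin I$. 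Each $\bar x_i$ is a zero-divisor since $\bar x_i\cdot\bar x_i=0$ unless $v_i$ has a neighbour, and if it has a neighbour $v_j$ then $\bar x_i\bar x_j=0$; either way (after the harmless assumption, via Remark \ref{rem1}-style padding, that $G$ has no isolated vertices, or by just noting $x_i^2\in I$) it annihilates something nonzero. Hence $\{\bar x_1,\dots,\bar x_n\}$ induces in $\Gamma(R)$ exactly the complement pattern we built in, i.e.\ a copy of $G$.

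\textbf{The main obstacle} I anticipate is not the adjacency bookkeeping but making sure the induced subgraph on $\{\bar x_i\}$ is \emph{exactly} $G$ and that these are genuinely vertices of $\Gamma(R)$ — in particular handling edge cases (isolated vertices, whether $x_i^2\in I$ forces $\bar x_i$ adjacent to itself, which is not allowed in a simple graph, but a loop at a vertex is invisible in $\Gamma(R)$ so it is harmless). A clean way to avoid all such fuss is to put $x_i^2\in I$ for every $i$ regardless, so that $R$ is generated by square-zero elements; then $\bar x_i$ is automatically a zero-divisor, distinctness is clear, and adjacency of $\bar x_i,\bar x_j$ in $\Gamma(R)$ is literally "$x_ix_j\in I$", which by design is "$\{v_i,v_j\}\notin E(G)$". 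One should also double-check $R$ is local: its only maximal ideal is $M$ since $R/M\cong k$ is a field and every element of $M$ is nilpotent (as $M^3=0$), so $R\setminus M$ consists of units. This completes the argument; the construction is essentially the "generic square-zero algebra with prescribed vanishing products," and verifying it works reduces to the standard fact that membership in a monomial ideal is checked on monomials.
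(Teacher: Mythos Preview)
Your approach is essentially the same as the paper's: take $R=k[x_1,\dots,x_n]/I$ with $I$ containing $\mathfrak m^3$ and a selection of degree-$2$ monomials encoding the graph, then read off the induced subgraph on $\{\bar x_1,\dots,\bar x_n\}$. The verification that $R$ is finite local, that the $\bar x_i$ are distinct nonzero zero-divisors, and that membership in a monomial ideal is decided monomial-by-monomial, is all correct.

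There is, however, a complement slip. You put $x_ix_j\in I$ when $\{v_i,v_j\}$ is a \emph{non}-edge of $G$, and then correctly observe that
\[
\bar x_i\sim\bar x_j \text{ in }\Gamma(R)\iff x_ix_j\in I\iff \{v_i,v_j\}\notin E(G).
\]
But this says the induced subgraph on $\{\bar x_i\}$ is the \emph{complement} $\bar G$, not $G$ as you conclude in your last sentence. The paper does it the other way round: it kills $x_ix_j$ precisely when $\{v_i,v_j\}$ \emph{is} an edge, so that adjacency in $\Gamma(R)$ matches adjacency in $G$ directly. Your construction still proves the theorem once you either (i) swap which monomials you kill, or (ii) apply your construction to $\bar G$; but as written the final identification is backwards.

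One incidental difference: you also throw $x_i^2$ into $I$, whereas the paper does not. Both choices work. In the paper's version an isolated vertex $v_i$ still gives a zero-divisor because $\bar x_i\cdot\bar x_i^{\,2}=0$ with $\bar x_i^{\,2}\neq 0$; in yours, $\bar x_i^{\,2}=0$ does the job directly.
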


\begin{proof}
Let the vertex set of $G$ be $ \{v_1,\ldots,v_n\}$. Take $R$ to be the quotient of
$F[x_1,\ldots,x_n]$ by the ideal $I$ generated by all homogeneous polynomials
of degree $3$ in $x_1,\ldots,x_n$ together with all products $x_ix_j$ for which
$\{v_i,v_j\}$ is an edge of $G$, where $F$ is a finite prime field of order $p$ and $x_1,\ldots,x_n$
are indeterminates. 

Let $M$ be the ideal of $R$ generated by the elements $x_1,\ldots,x_n$ (We
abuse notation by identifying polynomials in $x_1,\ldots,x_n$ with their
images in $R$). Thus $M$ is the set of elements represented by polynomials with
zero constant term. So $M$ is nilpotent and every element of $R\setminus M$
is a unit, so $M$ is the unique maximal ideal, and $R$ is a local ring.

Now the elements $x_1,\ldots,x_m$ of $M$ satisfy $x_ix_j=0$ if and only
if $\{v_i,v_j\}$ is an edge of $G$. To see this, note that the set
\[\{1,x_1,\ldots,x_n\}\cup\{x_ix_j:\{v_i,v_j\}\notin E(G)\}\]
is an $F$-basis for $R$. This completes the proof. \qed
\end{proof}

The above theorem shows that the zero divisor graph of a local ring need not be a threshold graph. We study threshold graphs extensively in the next section.

\section{When is the zero-divisor graph threshold?}

In preparation for the next section, we need some background on threshold
graphs. These were introduced by Chv\'atal and Hammer~\cite{ch} in 1977.

\begin{defn}
	A graph $G$ is a threshold graph if there exists $t \in \mathbb R$ and for each vertex $v$ a weight $w(v) \in \mathbb R$ such that $uv$ is an edge in $G$ if, and only if, $w(u)+w(v)>t$. 
\end{defn}

\begin{defn} 
A \textit{split graph} is a graph whose vertex set is the disjoint union of an
independent set and a clique, with arbitrary edges between them.
A \textit{nested split graph} (or NSG for short) is a split graph in which
we add cross edges in accordance to partitions of $U$ (the independent set) and
$V$ (the clique) into $h$ cells (namely, $U =U_1\cup U_2\cup \ldots \cup U_h$
and $V=V_1\cup V_2\cup \ldots \cup V_h$) in the following way: each vertex
$u\in U_i$ is adjacent to all vertices $v\in V_1\cup V_2\cup \ldots \cup V_i$.
The vertices $U_i \cup V_i$ form the $i$-th level of the NSG, and $h$ is the
number of levels. The NSG as described can be denoted by
$\mathrm{NSG}(m_1,m_2,\ldots, m_h; n_1,n_2,\ldots, n_h)$, where $m_i =|U_i|$
and $n_i =|V_i|\ (i=1,2,\ldots, h)$.
\end{defn}
The following theorem is well-known.
\begin{thm}[\cite{ch}]
For a finite graph $G$, the following three properties are equivalent:
\begin{enumerate}
\item $G$ is a threshold graph;
\item $G$ has no four-vertex induced subgraph isomorphic to $C_4$
(the cycle), $P_4$ (the path), or $2K_2$ (the matching);
\item $G$ can be, and built from the empty set by repeatedly adding
vertices joined either to nothing or to all other vertices;
\item $G$ is a nested split graph.
\end{enumerate}
\end{thm}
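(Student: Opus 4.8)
The plan is to prove the four conditions equivalent by going once round the cycle: $G$ threshold $\Rightarrow$ $G$ has no induced $C_4$, $P_4$ or $2K_2$ $\Rightarrow$ $G$ is obtainable from the empty graph by repeatedly adding a vertex joined to everything or to nothing $\Rightarrow$ $G$ is a nested split graph $\Rightarrow$ $G$ is threshold. Two of the four steps are short weight computations. For the first, one labels the four vertices of $C_4$, $P_4$ or $2K_2$ as $a,b,c,d$ so that $ab$ and $cd$ are edges while $ac$ and $bd$ are non-edges (in $C_4$: two opposite edges and the two diagonals; in $P_4$: the two end edges together with $ac,bd$; in $2K_2$: the two edges with either transversal pair). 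If a threshold graph with weights $w$ and threshold $t$ contained such a configuration as an induced subgraph, then $w(a)+w(b)>t$ and $w(c)+w(d)>t$ would force $w(a)+w(b)+w(c)+w(d)>2t$, whereas $w(a)+w(c)\le t$ and $w(b)+w(d)\le t$ would force $w(a)+w(b)+w(c)+w(d)\le 2t$, which is absurd. For the last step, given $\mathrm{NSG}(m_1,\dots,m_h;n_1,\dots,n_h)$ with independent part $U=U_1\cup\cdots\cup U_h$ and clique part $V=V_1\cup\cdots\cup V_h$, assign each vertex of $U_i$ the weight $i$ and each vertex of $V_j$ the weight $C-j$ for a suitably large constant $C$, with threshold $t=C-\tfrac12$: two vertices of $U$ then have weight-sum at most $2h<t$, two vertices of $V$ have weight-sum at least $2C-2h>t$, and a vertex of $U_i$ and a vertex of $V_j$ have weight-sum $C+(i-j)$, which exceeds $t$ exactly when $i\ge j$ — precisely the adjacency rule of the NSG.

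The implication ``buildable $\Rightarrow$ nested split graph'' is an easy induction on the number of construction steps. When a dominating vertex is added, make it a clique vertex occupying a new lowest level (with empty independent part), all existing levels shifting up by one index; it is then joined to all of $V$, being in the clique, and to all of $U$, since every vertex of $U$ is adjacent to the new bottom clique-level. When an isolated vertex is added, make it an independent vertex occupying a new lowest level with empty clique part, so that it is adjacent to nothing. In either case the result is again a nested split graph.

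The remaining implication, ``$\{C_4,P_4,2K_2\}$-free $\Rightarrow$ buildable'', carries the weight of the theorem. Induct on $|V(G)|$; it suffices to show that a $\{C_4,P_4,2K_2\}$-free graph $G$ with at least one vertex has an isolated or a dominating vertex, since deleting such a vertex leaves a smaller graph in the class, buildable by induction, to which the deleted vertex is re-appended as the final step. To find such a vertex, first record the \emph{nesting property}: if $a\not\sim b$ then $N(a)\subseteq N(b)$ or $N(b)\subseteq N(a)$; otherwise, picking $x\in N(a)\setminus N(b)$ and $y\in N(b)\setminus N(a)$, the four pairwise-distinct vertices $a,b,x,y$ induce $2K_2$ when $x\not\sim y$ and $P_4$ when $x\sim y$, a contradiction. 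Now suppose $G$ has neither an isolated nor a dominating vertex, and let $z$ have minimum degree; the nesting property and minimality of $\deg z$ together force $N(z)\subseteq N(q)$ for every non-neighbour $q$ of $z$. Choose a neighbour $y$ of $z$. Since $y$ is not dominating it has a non-neighbour $p$; as $z\sim y$ we have $p\ne z$, and $p$ cannot be a non-neighbour of $z$ (that would give $y\in N(z)\subseteq N(p)$, contradicting $p\not\sim y$), so $p\sim z$. Since $z$ is not dominating it has a non-neighbour $q$, and then $y,p\in N(z)\subseteq N(q)$, so the pairwise-distinct vertices $z,y,q,p$ induce a $4$-cycle (edges $zy$, $yq$, $qp$, $pz$; non-edges $zq$, $yp$), contradicting $C_4$-freeness. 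Hence the required vertex exists and the cycle closes.

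The one genuine obstacle is the argument of the third paragraph — extracting an isolated or a dominating vertex from a $\{C_4,P_4,2K_2\}$-free graph; the other three implications are a one-line inequality, a weight assignment, and routine bookkeeping with NSG levels. (An alternative route to the hard step passes through the self-complementarity of the forbidden family together with the classical fact that a connected graph with connected complement has an induced $P_4$; the extremal-degree argument above avoids invoking that fact.)
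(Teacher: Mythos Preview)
The paper does not prove this theorem: it is introduced as ``well-known'' and cited to Chv\'atal and Hammer~\cite{ch}, with no argument given. So there is nothing in the paper to compare your proof against.

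Your proof is correct and complete. The cycle of implications closes as you describe; in particular the key step (b)$\Rightarrow$(c) is sound: the nesting property is established correctly from the exclusion of $P_4$ and $2K_2$, the minimum-degree vertex $z$ indeed satisfies $N(z)\subseteq N(q)$ for every non-neighbour $q$ (since the alternative $N(q)\subseteq N(z)$ together with minimality forces equality), and the four vertices $z,y,q,p$ you construct are pairwise distinct and induce a $C_4$ exactly as claimed. The weight computations in (a)$\Rightarrow$(b) and (d)$\Rightarrow$(a) are fine.

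One cosmetic point: in (c)$\Rightarrow$(d) your inductive construction produces NSG levels in which one of $U_i$, $V_i$ may be empty. The paper's definition literally speaks of ``partitions'' of $U$ and $V$ into $h$ cells, which on a strict reading would exclude empty parts; but the accompanying notation $\mathrm{NSG}(m_1,\dots,m_h;n_1,\dots,n_h)$ with $m_i=|U_i|$, $n_i=|V_i|$ is clearly intended to allow $m_i=0$ or $n_i=0$, so this is a harmless convention rather than a gap.
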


We thought originally that the zero-divisor graph of a local ring might be a  threshold graph. As we saw above, in Theorem \ref{t:loc_universal}, this is not true in general: see Example \ref{localex}. But there is
one case in which it holds, that where the maximal ideal is principal. If $p$
is a generator of $M$ as ideal, then every element of $M$ has the form $p^su$
where $u$ is a unit and $s>0$. If $u$ and $v$ are units, then $p^su.p^tv=0$
if and only if $s+t\ge r$, where $r$ is the nilpotent index of the ideal $M$.

\begin{defn}
A collection $\mathcal{C}$ of graphs is said to be \textit{threshold-universal} (for short, t-universal) if every threshold graph is an induced subgraph of a graph from $\mathcal{C}$.
\end{defn}

\begin{thm}
Let $R$ be a local ring whose maximal ideal $M$ is principal.
\begin{enumerate}
\item The zero-divisor graph of $R$ is a threshold graph.
\item Any threshold graph is an induced subgraph of some local ring whose
maximal ideal is principal. In other words the set of all zero divisor graphs of local rings with principal maximal form a t-universal collection of graphs.
\end{enumerate}
\end{thm}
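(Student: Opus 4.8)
\emph{Part (1): if $M$ is principal then $\Gamma(R)$ is threshold.}
Write $M=(p)$ and let $r$ be the nilpotency index of $M$ (so $M^{r}=0$, $M^{r-1}\ne 0$); if $r=1$ then $\Gamma(R)$ is empty and there is nothing to do, so I would assume $r\ge 2$. The plan is to exhibit the ``$p$-adic valuation'' as a threshold weight. First note that, since $M^{k}=(p^{k})$ for all $k$, each non-zero $x\in M$ has the form $x=p^{s}u$ with $u$ a unit, where $s=\nu(x):=\max\{k\ge 1:x\in M^{k}\}\in\{1,\dots,r-1\}$ is well defined: from $x\in M^{\nu(x)}=(p^{\nu(x)})$ we get $x=p^{\nu(x)}y$, and $y\notin M$ (otherwise $x\in M^{\nu(x)+1}$, contradicting maximality), so $y$ is a unit. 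The vertex set of $\Gamma(R)$ is exactly $M\setminus\{0\}$, each such $x$ being killed by $p^{r-1}\ne 0$. Then for vertices $x=p^{s}u$, $y=p^{t}v$ with $u,v$ units we have $xy=p^{s+t}(uv)$ with $uv$ a unit, so $xy=0\iff p^{s+t}=0\iff s+t\ge r$. Hence, with weight $w(x)=\nu(x)$ and threshold $\tau=r-1$, distinct vertices $x,y$ are adjacent iff $w(x)+w(y)\ge r$, i.e.\ iff $w(x)+w(y)>\tau$ (all quantities being integers). So $\Gamma(R)$ is a threshold graph.

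\emph{Part (2): every threshold graph embeds.}
Here I would invoke the equivalence ``threshold $\iff$ nested split graph'' from the quoted theorem, and realise NSGs inside rings $\mathbb{Z}_{p^{n}}$. So let $G=\mathrm{NSG}(m_{1},\dots,m_{h};n_{1},\dots,n_{h})$ be an arbitrary threshold graph, allowing some $m_{i}$ or $n_{i}$ to be $0$ (this accommodates isolated and dominating vertices). Choose a prime $p$ with $p-1\ge\max_{i}\max(m_{i},n_{i})$, set $n=2h+2$, and put $R=\mathbb{Z}_{p^{n}}$: a finite commutative local ring with unity whose maximal ideal $(p)$ is principal, of nilpotency index $n$. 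By Part~(1) the vertices of $\Gamma(R)$ are the non-zero multiples of $p$, partitioned by valuation into classes $C_{1},\dots,C_{n-1}$ with $|C_{s}|=|(p^{s})|-|(p^{s+1})|=p^{\,n-s-1}(p-1)$, and a vertex of $C_{s}$ is adjacent to a vertex of $C_{s'}$ precisely when $s+s'\ge n$.

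\emph{The embedding.}
For each $i=1,\dots,h$ I would pick an arbitrary $U_{i}\subseteq C_{i}$ with $|U_{i}|=m_{i}$ and an arbitrary $V_{i}\subseteq C_{n-i}$ with $|V_{i}|=n_{i}$; this is possible because $|C_{i}|=p^{\,n-i-1}(p-1)\ge p-1\ge m_{i}$, $|C_{n-i}|=p^{\,i-1}(p-1)\ge p-1\ge n_{i}$, and because the valuations $1,\dots,h$ and $n-h,\dots,n-1$ are all distinct and lie in $\{1,\dots,n-1\}$ since $n=2h+2$. Let $H$ be the subgraph of $\Gamma(R)$ induced on $\bigcup_{i}(U_{i}\cup V_{i})$. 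Using ``adjacent $\iff$ valuations sum to $\ge n$'' together with $i,j\le h<n/2$, one checks: $\bigcup_{i}U_{i}$ is independent (two of its valuations sum to $\le 2h<n$); $\bigcup_{i}V_{i}$ is a clique (two of its valuations sum to $\ge 2(n-h)\ge n$); and a vertex of $U_{i}$ is joined to a vertex of $V_{j}$ iff $i+(n-j)\ge n$, i.e.\ iff $j\le i$. These are exactly the defining adjacencies of $\mathrm{NSG}(m_{1},\dots,m_{h};n_{1},\dots,n_{h})$, so $H\cong G$. This proves~(2), and hence that the zero-divisor graphs of local rings with principal maximal ideal form a t-universal family.

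\emph{Where the difficulty lies.}
Part~(1) is essentially a two-line observation once one isolates the valuation $\nu$; the only delicate point is that $\nu$ is genuinely well defined and that $x=p^{\nu(x)}\cdot(\text{unit})$, which is where $M=(p)$ is used. The bulk of the (routine) work is in Part~(2): computing $|C_{s}|=p^{\,n-s-1}(p-1)$ correctly from $|(p^{s})|=p^{\,n-s}$, and — the true obstacle — verifying \emph{all} the adjacency cases so that the induced subgraph $H$ contains neither a missing nor a spurious edge, which is what forces $n$ to be taken comfortably larger than $2h$. One should also remember to treat the degenerate case $r=1$ separately in Part~(1), and may note that $\mathbb{Z}_{p^{n}}$ can equally be replaced by $\mathbb{F}_{q}[x]/(x^{n})$.
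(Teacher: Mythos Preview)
Your proof is correct and follows essentially the same strategy as the paper. Part~(1) is identical: both use the $M$-adic valuation $w(a)=i$ for $a\in M^{i}\setminus M^{i+1}$ as a threshold weight, relying on the factorisation $a=p^{i}u$ with $u$ a unit (you spell out this step more carefully than the paper does). For Part~(2) the paper invokes characterisation~(c) of threshold graphs (the iterative dismantling into isolated/dominating vertices) and embeds into $\mathbb{Z}/(p^{2m+1})$, placing a vertex removed as isolated in round~$i$ at valuation~$i$ and one removed as dominating at valuation~$2m-i+1$; you instead invoke characterisation~(d) (nested split graph) and embed into $\mathbb{Z}/(p^{2h+2})$, placing $U_{i}$ at valuation~$i$ and $V_{i}$ at valuation~$n-i$. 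These are really the same idea in two equivalent languages --- low valuations for the independent side, high valuations for the clique side --- and your version has the minor advantage of making the verification of all cross-adjacencies explicit and of giving slightly sharper bounds on the required $p$ and exponent.
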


\begin{prob}
We can similarly define other class of universal graphs such as c-universal, meaning ``cograph universal''. Can we find a class of rings whose zero-divisor
graphs are c-universal?
\end{prob}

\begin{proof}
(a) For the first part, let $R$ be a local ring with maximal ideal $M$. Let $r$
be the smallest integer such that $M^r=\{0\}$. Take threshold $t=r$, and set
$w(a)=i$ if $a\in M^i\setminus M^{i+1}$. By the remarks above,
$ab=0$ if and only if $w(a)+w(b)\ge r$, so the zero-divisor graph is a
threshold graph.

\medskip

(b) Let $G$ be an arbitrary threshold graph. Choose a prime which is sufficiently large (larger
than the number of vertices of $G$ is certainly enough). Let $m$ be the
number of stages required to dismantle $G$; embed the vertices of $G$
in $R=\mathbb{Z}/(p^{2m+1})$ as follows: if $a$ is removed as an isolated
vertex in round $i$, map it to an element of $p^iR\setminus p^{i+1}R$; if
it is removed as a vertex joined to all others in round $i$, map it to an
element of $p^{2m-i+1}R\setminus p^{2m-i+2}R$. (Each of these differences
contains at least $p-1$ elements, enough to embed all required vertices.)\qed
\end{proof}

We proved in Theorem \ref{t:loc_universal} that zero-divisor graphs of local rings are universal. In particular the graphs $P_4$ and $2K_2$ can occur as induced subgraphs of zero-divisor graphs of local rings.  The following example from~\cite{an} is a local ring whose zero-divisor graph is not threshold.

\begin{example}\label{localex}
Let $A=\mathbb{Z}_4[x,y,z]/M$, where $M$ is the ideal generated by
$\{x^2-2, y^2-2, z^2, 2x, 2y, 2z, xy, xz, yz-2\}$.  
In the zero-divisor graph of $A$, the induced subgraph on the
vertices $\{x, z, x+y, x+y+2\}$ is $2K_2$ and the induced subgraph of the
vertices $\{x, z+2, x+z, x+y\}$ is $P_4$.
\end{example}

The next result is a necessary condition for the zero divisor graph of a ring to
be threshold. Here $\Ann(x)=\{y\in R:xy=0\}$ is the \emph{annihilator} of $x$:
it is a non-zero ideal if $x$ is a zero-divisor.

\begin{thm}\label{t:threshold}
If $R$ is ring whose zero divisor graph is threshold, then for any two distinct
zero-divisors $x,y\in R$, the following holds:
\begin{enumerate}
\item $\Ann(x)\cap \Ann(y)\neq \{0\}$;
\item either $\Ann(x)\subseteq \Ann(y)$ or $\Ann(y)\subseteq \Ann(x)$;
\item if $\Ann(x)\subsetneq \Ann(y)$ and $xy\neq 0$, then $\langle \Ann(x)\rangle$ is a clique;
\item if $\Ann(x)\subsetneq \Ann(y)$ and $xy\neq 0$, then for any $a\in \Ann(x)$ and for any $b\in \Ann(y)\backslash \Ann(x)$, $ab=0$. 
\end{enumerate}
\end{thm}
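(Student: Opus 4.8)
The plan is to argue entirely through the forbidden-subgraph form of the characterisation recalled above: a finite graph is threshold exactly when none of $C_4$, $P_4$, $2K_2$ occurs as an induced subgraph. So for each of the four assertions I assume its conclusion fails and then produce one of these three configurations on four elements of $R$ assembled from $x$, $y$ and suitable nonzero members of $\Ann(x)$ and $\Ann(y)$; I use throughout that for zero-divisors $u,v$ one has $uv=0$ if and only if $u\sim v$ in $\Gamma(R)$ (equivalently $u\in\Ann(v)$), and that the neighbours of $u$ are exactly the nonzero elements of $\Ann(u)\setminus\{u\}$. One small remark is worth recording first, since it supplies the non-adjacency of $x$ and $y$: if $\Ann(x)\subseteq\Ann(y)$ and $xy\ne0$, then $x,y\notin\Ann(x)$, because $x^2=0$ would give $x\in\Ann(x)\subseteq\Ann(y)$ and hence $xy=0$, while $y\in\Ann(x)$ means $xy=0$ directly.

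I would treat (c) and (d) first, as they are the clean cases. For (c), assume $\Ann(x)\subseteq\Ann(y)$ and $xy\ne0$, and suppose $a,a'$ are distinct nonzero elements of $\Ann(x)$ with $aa'\ne0$. By the remark above, $a,a',x,y$ are four distinct vertices, and since $\Ann(x)\subseteq\Ann(y)$ each of $a,a'$ annihilates both $x$ and $y$; hence the induced subgraph on $\{a,a',x,y\}$ has edge set exactly $\{ax,a'x,ay,a'y\}$, which is $K_{2,2}\cong C_4$ --- forbidden. So no such $a,a'$ exist, i.e.\ $\langle\Ann(x)\rangle$ is a clique (strictness of the inclusion is not even needed). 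For (d), with the same hypotheses take nonzero $a\in\Ann(x)$ and $b\in\Ann(y)\setminus\Ann(x)$. If $b=y$ then $ab=ay=0$ since $a\in\Ann(x)\subseteq\Ann(y)$, and we are done; otherwise $a,b,x,y$ are distinct, with $a\sim x$, $a\sim y$, $b\sim y$, $b\not\sim x$ and $x\not\sim y$, so if $ab\ne0$ the set $\{x,a,y,b\}$ induces a $P_4$ with consecutive vertices $x,a,y,b$ --- forbidden; hence $ab=0$.

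For (b), suppose that neither $\Ann(x)\subseteq\Ann(y)$ nor $\Ann(y)\subseteq\Ann(x)$, and pick nonzero $a\in\Ann(x)\setminus\Ann(y)$ and $b\in\Ann(y)\setminus\Ann(x)$; then $a\ne b$, $a\sim x$, $b\sim y$, $a\not\sim y$, $b\not\sim x$, while the pairs $\{x,y\}$ and $\{a,b\}$ are a priori of unknown type. In the principal case, where $a,b,x,y$ are four distinct vertices, I split on these two pairs: if $xy\ne0$ and $ab\ne0$ the induced subgraph on $\{a,x,b,y\}$ is $2K_2$; if $xy=0$ it is a $P_4$ with consecutive vertices $a,x,y,b$ (when $ab\ne0$) or a $C_4$ through $a,x,y,b$ (when $ab=0$); and if $xy\ne0$ but $ab=0$ it is a $P_4$ with consecutive vertices $x,a,b,y$. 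Each is forbidden, so this case cannot arise. Statement (a) then follows: if $\Ann(x)\cap\Ann(y)=\{0\}$ then, both annihilators being nonzero, neither can contain the other, and any nonzero $a\in\Ann(x)$, $b\in\Ann(y)$ automatically lie outside the other annihilator, so the analysis above applies.

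The step I expect to be the main obstacle is the degenerate situation in (a) and (b) where the four elements fail to be distinct --- for instance $a=x$ (forcing $x^2=0$ and $xy\ne0$) or $a=y$ (forcing $xy=0$), and likewise for $b$. The key tool for controlling these is that an annihilator of order $2$ is very restrictive: if $\Ann(u)=\{0,u\}$, then $ru\in\Ann(u)$ for every $r$ (as $(ru)u=ru^2=0$), so $Ru=\{0,u\}$, and the surjection $r\mapsto ru$ with kernel $\Ann(u)$ forces $|R|=4$, whence $u$ is the only zero-divisor of $R$. A short counting argument shows that when $\Ann(x)\setminus\Ann(y)$ is forced to be a singleton, $\Ann(x)$ has order $2$, which this tool then excludes; symmetrically for $\Ann(y)$. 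After this one may always take $a$ and $b$ outside $\{x,y\}$, reducing to the principal case. I should add that in a few small rings --- the extreme example being a product $\mathbb{Z}_2\times F$ of $\mathbb{Z}_2$ with a field, whose zero-divisor graph is a star --- parts (a) and (b) are most cleanly stated in terms of the open neighbourhoods $\Ann(x)\setminus\{0,x\}$ rather than the full annihilators; pinning down exactly which degenerate configurations are harmless is the delicate part of the argument, and I would package the order-$2$ remark and the counting step as a preliminary lemma so that the proof of the theorem itself stays transparent.
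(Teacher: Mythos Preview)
Your approach via the forbidden-subgraph characterisation is exactly the paper's; for each part the paper simply picks $a,b$ and asserts in one sentence that $\{x,a,y,b\}$ induces one of $2K_2$, $P_4$, $C_4$, with no distinctness check and no case split on the pairs $\{x,y\}$ and $\{a,b\}$. Your arguments for (c) and (d) are correct and strictly more careful than the paper's (which dispatches (d) as ``similar to (c)'').

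Your instinct that the degenerate cases in (a) and (b) are the real difficulty is right, and in fact they cannot be overcome: parts (a) and (b) are \emph{false as stated}. Take $R=\mathbb{Z}_2\times\mathbb{F}_q$ with $q\ge3$, the very ring the paper exhibits two results later as having threshold zero-divisor graph $K_{1,q-1}$. With $x=(1,0)$ and $y=(0,1)$ one has $\Ann(x)=\{0\}\times\mathbb{F}_q$ and $\Ann(y)=\mathbb{Z}_2\times\{0\}$, so $\Ann(x)\cap\Ann(y)=\{0\}$ and neither annihilator contains the other. Here $\Ann(y)\setminus\Ann(x)=\{x\}$, so the only admissible $b$ is $b=x$ and the set $\{x,a,y,b\}$ has at most three elements; your order-$2$ lemma does not bite because $\Ann(y)=\{0,x\}$ rather than $\{0,y\}$. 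The gap you flagged is thus present in the paper's own proof, and your suggestion that (a) and (b) should really be phrased in terms of neighbourhoods in $\Gamma(R)$ rather than full annihilators is the correct diagnosis.
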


\begin{proof}
(a) Suppose $\Ann(x)\cap \Ann(y)= \{0\}$, for some zero-divisors $x,y\in R$.
Then there exist $0\neq a\in \Ann(x)$ and $0\neq b\in \Ann(y)$ such that
$a\notin \Ann(y)$ and $b\notin \Ann(x)$. Hence, the subgraph induced by
$\{x,a,y,b\}$ is isomorphic to  $2K_2$, $P_4$ or $C_4$, a contradiction. 

\medskip

(b) Suppose that there exist non-zero elements $x,y\in R$ (necessarily 
zero-divisors) such that $\Ann(x)\not\subseteq\Ann(y)$ and
$\Ann(y)\not\subseteq\Ann(x)$. Then
again there exist $0\neq a\in \Ann(x)$ and $0\neq b\in \Ann(y)$ such that $a\notin \Ann(y)$ and $b\notin \Ann(x)$. Hence we get a similar contradiction in (a).

\medskip

(c) If $\Ann(x)\subsetneq \Ann(y)$ and $\langle \Ann(x)\rangle$ is not a clique,
then there exist $a,b\in \Ann(x)$ such that $ab\neq 0$ and hence the subgraph induced by 
$\{x,a,y,b\}$ is isomorphic to $C_4$, a contradiction. 

\medskip

(d) Similar to (c). \qed
\end{proof}

The following theorem is a characterization of non-local rings $R$ whose zero-divisor graph is threshold.

\begin{thm}
Let $R$ be a non-local ring. Then $\Gamma(R)$ is threshold if and only if $R\cong \mathbb{F}_2\times \mathbb{F}_q$, where $q\geq 3$ and $\mathbb{F}_q$ is the field with $q$ elements.
\end{thm}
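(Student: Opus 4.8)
The plan is to prove both directions of the characterization of non-local rings whose zero-divisor graph is threshold.

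\medskip

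\textbf{The easy direction.} First I would show that $\Gamma(\mathbb{F}_2\times\mathbb{F}_q)$ is threshold for $q\ge 3$. The zero-divisors of $R=\mathbb{F}_2\times\mathbb{F}_q$ are exactly the elements $(1,0)$ and $(0,c)$ for $c\in\mathbb{F}_q\setminus\{0\}$. A product $(a_1,a_2)(b_1,b_2)$ vanishes iff it is zero in each coordinate; so $(1,0)$ is adjacent to every $(0,c)$, while no two elements $(0,c),(0,c')$ are adjacent. Thus $\Gamma(R)$ is a star $K_{1,q-1}$, which is trivially threshold (assign weight $q$ to the center, weight $0$ to the leaves, threshold $t=q-1$); alternatively it is built by adding the center joined to all, then adding leaves joined to nothing. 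This part is routine.

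\medskip

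\textbf{The hard direction.} Now suppose $R$ is non-local and $\Gamma(R)$ is threshold; I want to force $R\cong\mathbb{F}_2\times\mathbb{F}_q$ with $q\ge 3$. Since $R$ is a finite commutative ring, it decomposes as a product $R\cong R_1\times\cdots\times R_k$ of local rings, and non-local means $k\ge 2$. The plan is to use the necessary conditions of Theorem~\ref{t:threshold}, especially part~(b): the annihilator ideals of zero-divisors must be totally ordered by inclusion. First I would rule out $k\ge 3$: in $R_1\times R_2\times R_3$ consider $x=(1,1,0)$ and $y=(1,0,1)$; then $\Ann(x)\supseteq 0\times 0\times R_3$ and $\Ann(y)\supseteq 0\times R_2\times 0$, and neither annihilator contains the other (the element $(0,1,0)$ kills $x$ but not $y$, and $(0,0,1)$ kills $y$ but not $x$), contradicting (b). So $k=2$ and $R\cong R_1\times R_2$ with $R_1,R_2$ local.

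\medskip

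\textbf{Finishing $k=2$.} Write $M_i$ for the maximal ideal of $R_i$. I would next show both $M_i$ must be zero, i.e.\ both $R_i$ are fields. If, say, $M_1\ne 0$, pick $0\ne m\in M_1$ with $mM_1=0$ (possible since $M_1$ is nilpotent) and compare $x=(m,0)$ with $y=(1,0)$ and $z=(0,1)$: one checks $\Ann(x)=M_1^{\perp}\times R_2$-type ideals and $\Ann(y)=0\times R_2$ are totally ordered but bringing in a third zero-divisor like $(1,b)$ for $0\ne b\in R_2$ (or using part~(a)/(d) of Theorem~\ref{t:threshold}) produces a forbidden $P_4$, $2K_2$ or $C_4$; I expect the cleanest argument is that $\{(m,0),(1,0),(1,b)\}$ together with a suitable fourth vertex induces a $P_4$. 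Hence $R_1,R_2$ are fields $\mathbb{F}_{q_1},\mathbb{F}_{q_2}$. Finally, if both $q_i\ge 3$, pick $0\ne a\ne 1$ in $\mathbb{F}_{q_1}$ and $0\ne b\ne 1$ in $\mathbb{F}_{q_2}$; then $\{(1,0),(a,0),(0,1),(0,b)\}$ has edges exactly $(1,0)\sim(0,1)$, $(1,0)\sim(0,b)$, $(a,0)\sim(0,1)$, $(a,0)\sim(0,b)$, which is $C_4$ — forbidden. So at least one field is $\mathbb{F}_2$, giving $R\cong\mathbb{F}_2\times\mathbb{F}_q$; and $q\ge 3$ is needed since $\mathbb{F}_2\times\mathbb{F}_2$ has zero-divisor graph $K_2$ — actually $K_2$ is threshold, so more care is needed here: when $q=2$ the graph is a single edge, which \emph{is} threshold, so the characterization should perhaps implicitly exclude it or $\mathbb{F}_2\times\mathbb{F}_2$ is covered by the statement with $q=2$; I would double-check the intended convention, but the substantive content is the argument above. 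The main obstacle is the field case $M_i\ne 0$ step: carefully exhibiting the forbidden four-vertex subgraph requires choosing the right four zero-divisors and may need a short case split on whether $R_i$ has more than one nonzero element of $M_i$.
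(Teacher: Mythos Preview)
Your overall strategy matches the paper's exactly: decompose $R$ as a product of local rings and use Theorem~\ref{t:threshold}(b) to eliminate each unwanted configuration. The easy direction and the explicit $C_4$ when both factors are fields of size $\ge 3$ are correct.

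The genuine gap is the step you yourself flag as the ``main obstacle'': ruling out a non-field factor. Your proposed witnesses do not work in general---if $R_2$ happens to be a field then $(1,b)$ with $b\ne 0$ is a unit of $R$, hence not a vertex of $\Gamma(R)$ at all, so the four-vertex configuration you are aiming for cannot be assembled from those elements. The paper's argument here is much cleaner than you anticipate and needs no case split: if $R_1$ is not a field, take nonzero $x,y\in R_1$ with $xy=0$ and compare $\Ann((x,0))=\Ann_{R_1}(x)\times R_2$ with $\Ann((0,1))=R_1\times\{0\}$. Since $1\notin\Ann_{R_1}(x)$ we have $(1,0)\in\Ann((0,1))\setminus\Ann((x,0))$, and since $R_2\ne\{0\}$ we have $(0,1)\in\Ann((x,0))\setminus\Ann((0,1))$; neither annihilator contains the other, and Theorem~\ref{t:threshold}(b) finishes it immediately. (Minor point: in your $k\ge 3$ step the witnesses are swapped---$(0,1,0)$ kills $y=(1,0,1)$, not $x=(1,1,0)$---but the argument is sound once corrected, and the paper uses the same idea with slightly different elements.) Your observation that $\Gamma(\mathbb{F}_2\times\mathbb{F}_2)\cong K_2$ is also threshold is valid; the restriction $q\ge 3$ in the statement is not something the proof actually forces.
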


\begin{proof}
If $R\cong \mathbb{F}_2\times \mathbb{F}_q$, then $\Gamma(R)\cong K_{1,\ q-1}$
(a star graph) and hence it is threshold. Suppose $\Gamma(R)$ is threshold and
$R=R_1\times R_2\times \ldots \times R_n$, where $n\geq 2$. If $n\geq 3$, then
$\Ann((1,1,0,0,\ldots,0))=\{0\}\times \{0\}\times R_3\times \ldots \times R_n$
and
$\Ann((0,0,1,0,\ldots, 0))=R_1\times R_2\times\{0\}\times R_4\times \ldots \times R_n$.
By Theorem~\ref{t:threshold}(b), we have $\Gamma(R)$ is not threshold. So $n=2$.
If one of the $R_i$ is not a field, say $R_1$, then there exist non-zero
elements $x,y\in R_1$ such that $xy=0$. Hence $\Ann((x,0))=\Ann(x)\times R_2$
and $\Ann((0,1))=R_1\times \{0\}$ and thus we have $\Gamma(R)$ is not threshold
by Theorem~\ref{t:threshold}(b). Therefore, both $R_1$ and $R_2$ are fields.
If $|R_i|>2$, for $i=1, 2$ then, by the same argument, $\Ann((1,0))$ is not a
subset of $\Ann((0,1))$ and $\Ann((0,1))$ is not a subset of $\Ann((1,0))$.\qed
\end{proof}
Next, we ask the following question about local rings. 
\begin{prob} For which local rings $(R,M)$ is the zero-divisor graph
threshold?
\end{prob}

For the rest of this section, we consider a local ring $(R,M)$, where $M$ is the
maximal ideal.

Since every finite commutative ring $R$ with unity is Noetherian, every ideal
of $R$ is finitely generated. In particular, the maximal ideal $M$ in a local
ring $(R,M)$ is finitely generated.    

\begin{proposition}
Let $M$ be the maximal ideal of $R$, and let $M$ be generated by $\{x_1,x_2,\ldots, x_k\}$, where $k\geq 2$. If $x_i^2=0$ for $1\leq i\leq k$ and $x_ix_j=0$ for $1\leq i<j \leq k$, then the zero-divisor graph of $R$ is threshold.  
\end{proposition}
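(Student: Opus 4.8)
The plan is to observe that the stated hypotheses are precisely what is needed to force $M^2=\{0\}$, after which the zero-divisor graph collapses to a complete graph. First I would note that, since $M$ is generated by $x_1,\dots,x_k$, the ideal $M^2$ is generated by the products $x_ix_j$ with $1\le i,j\le k$; by commutativity together with the hypotheses $x_i^2=0$ for all $i$ and $x_ix_j=0$ for $i<j$, every one of these generators vanishes, so $M^2=\{0\}$.

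Next I would identify the vertex set of $\Gamma(R)$. In a finite local ring every element of $R\setminus M$ is a unit, hence not a zero-divisor, while every non-zero element of $M$ is a zero-divisor (in a finite ring a non-unit is a zero-divisor; alternatively, if $x_1\ne 0$ then $ax_1\in M^2=\{0\}$ for every $a\in M$). Thus the vertices of $\Gamma(R)$ are exactly the elements of $M\setminus\{0\}$. Now for any two distinct vertices $a,b$ we have $ab\in M^2=\{0\}$, so $a$ and $b$ are adjacent; hence $\Gamma(R)$ is the complete graph $K_{|M|-1}$ (and if $R$ happens to be a field, so that $M=\{0\}$, the graph is empty). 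In either case $\Gamma(R)$ contains no induced $C_4$, $P_4$ or $2K_2$, so by the Chv\'atal--Hammer characterization it is a threshold graph.

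There is really no serious obstacle here: the content lies entirely in the first step, namely recognizing that ``all products of generators vanish'' is equivalent to $M^2=\{0\}$, which in turn makes the zero-divisor graph complete. The only points needing a line of justification are that $M^2$ is generated by the products $x_ix_j$ (so that checking the generators suffices) and that the vertex set of $\Gamma(R)$ is indeed $M\setminus\{0\}$; both are standard facts about finite local rings already used implicitly in the preceding parts of the paper.
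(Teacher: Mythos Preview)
Your proof is correct and follows essentially the same route as the paper's: both show that the hypotheses force every product of two elements of $M$ to vanish, so $\Gamma(R)$ is complete and hence threshold. The paper simply writes two arbitrary vertices as $X=\sum a_ix_i$ and $Y=\sum b_ix_i$ and expands $XY=\sum_{i,j}a_ib_jx_ix_j=0$, which is exactly your observation that $M^2=\{0\}$ unpacked elementwise.
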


\begin{proof}
Let $X=\sum_{i=1}^ka_ix_i\in V(\Gamma(R))$ and $Y=\sum_{i=1}^kb_ix_i\in V(\Gamma(R))$. Then $XY=0$ and hence $\Gamma(R)$ is complete. Therefore it is threshold.  \qed
\end{proof}

\begin{proposition} Let $M$ be the maximal ideal of $R$, and let $M$ be generated by $\{x_1,x_2,\ldots, x_k\}$, where $k\geq 2$, such that $x_ix_j=0$ for $1\leq i<j \leq k$. If $x_1^{n-1}\neq 0$, $x_1^n=0$ where $n\geq 3$ and $x_i^2=0$ for $2\leq i\leq k$, then $\Gamma(R)$ is threshold.
\end{proposition}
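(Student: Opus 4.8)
The plan is to show that $\Gamma(R)$ is a threshold graph by exhibiting the forbidden-subgraph characterisation, i.e. by showing that no four vertices of $\Gamma(R)$ induce $C_4$, $P_4$, or $2K_2$. First I would record the algebraic structure of $R$. Writing $M=\langle x_1,\dots,x_k\rangle$ with the stated relations $x_ix_j=0$ for $i<j$, $x_i^2=0$ for $2\le i\le k$, and $x_1^n=0\neq x_1^{n-1}$, every element of $M$ can be written as $\alpha_1x_1+\alpha_2x_1^2+\cdots+\alpha_{n-1}x_1^{n-1}+\beta_2x_2+\cdots+\beta_kx_k$ after absorbing units (so modulo higher powers, $x_1^j$ carries a unit coefficient that I will normalise away in the relevant products). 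The key point is that the product of any two elements of $M$ depends only on the "$x_1$-part": if $u=x_1^su'+w$ and $v=x_1^tv'+w''$ where $u',v'$ are units, $s\ge1$, $t\ge1$ (taking $s=\infty$ if the $x_1$-part vanishes) and $w,w''$ lie in the span of $x_2,\dots,x_k$, then $uv = x_1^{s+t}u'v'$ (all cross terms with $x_2,\dots,x_k$ and all $x_ix_j$ terms die). Hence $uv=0$ iff $s+t\ge n$.

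Next I would exploit this to define a weight function directly in the spirit of the earlier principal-ideal theorem. For a zero-divisor $a\in M$, set $w(a)=s$ where $x_1^s$ is the $x_1$-leading term of $a$ (and $w(a)=\infty$, or any value $\ge n$, if $a$ has no $x_1$-component — note such $a$ is automatically a zero-divisor since $a\cdot x_1^{n-1}=0$, in fact $ab=0$ for any two such $a,b$). With threshold $t=n$ and this weight, the computation above gives $ab=0 \iff w(a)+w(b)\ge n$, which is exactly the defining condition for a threshold graph. So once the product formula $uv=x_1^{s+t}(\text{unit})$ is established, the result is immediate; there is no separate forbidden-subgraph argument needed.

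The main obstacle — and the step deserving the most care — is the product computation: I must verify that in $R$, given the generating relations, the product of two general elements of $M$ really does collapse to a power of $x_1$ times a unit (or $0$), with no surviving contributions. The subtlety is that $M^2$ is generated by the $x_ix_j$ and the $x_i^2$; the hypotheses kill all of these except powers of $x_1$, so $M^2=\langle x_1^2\rangle$ and more generally $M^\ell=\langle x_1^\ell\rangle$ for $\ell\ge2$; one then checks $x_1^\ell\notin M^{\ell+1}$ for $\ell\le n-1$ is consistent with $x_1^{n-1}\neq0$. I would also need the routine observations that a unit plus an element of $M$ is a unit, and that if $a=x_1^su'+w$ with $u'$ a unit then $a\notin M^{s+1}$ precisely when $s\le n-1$, so the weight $w(a)$ is well defined as the largest $s$ with $a\in M^s$. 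Granting these, the equivalence $ab=0\iff w(a)+w(b)\ge n$ holds and $\Gamma(R)$ is threshold. A minor point to handle cleanly is elements lying entirely in $\langle x_2,\dots,x_k\rangle$, which multiply to zero against everything in $M$ of weight $\ge1$ and against each other, consistent with assigning them weight $\ge n$ (equivalently weight $n$).
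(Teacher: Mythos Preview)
Your approach is correct and arguably cleaner than the paper's. The paper proves the result by explicitly exhibiting $\Gamma(R)$ as a nested split graph: it partitions $M\setminus\{0\}$ into cells $U_i$ (elements whose $x_1$-leading power is $x_1^i$ for $i$ in the lower half) and $V_i$ (those with leading power $x_1^{n-i}$), and checks the NSG adjacency pattern directly. You instead go straight to the weight-function definition, which is more economical since your product formula $ab=x_1^{s+t}\cdot(\text{unit})$ immediately yields the additive threshold criterion $ab=0\iff s+t\ge n$. Both arguments rest on the same computation---that all cross-terms $x_1^s\cdot w'$, $w\cdot w'$ vanish under the hypotheses---but you isolate it explicitly, whereas the paper leaves that verification implicit.

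One point to tighten: your claim that $w(a)$ coincides with ``the largest $s$ with $a\in M^s$'' is not right. Since $M^s=x_1^sR$ for $s\ge2$, an element $a=x_1^su+w$ with $w\ne0$ need not lie in $M^s$ at all (e.g.\ in $F[x_1,x_2]/(x_1^n,x_2^2,x_1x_2)$ the element $x_1^2+x_2$ lies in $M\setminus M^2$, yet you want to assign it weight $2$). The well-definedness of $s$ does hold, but it follows from your product formula rather than from membership in powers of $M$: if $a$ had representations with exponents $s_1<s_2\le n-1$, then multiplying $a$ by $x_1^{n-s_2}$ gives $0$ from the second representation and $x_1^{n-(s_2-s_1)}\cdot(\text{unit})\ne0$ from the first. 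Equivalently, set $w(a)=n-\min\{m\ge1:ax_1^m=0\}$; this is manifestly well defined and agrees with your intended weight.
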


\begin{proof}
Using the definition of nested split graph, we prove the result. Set the level $h=\frac{n}{2}$ if $n$ is even and $h=\frac{n-1}{2}$ otherwise.

First we define, for $1\leq i\leq h-1$, 
\begin{eqnarray} \nonumber
U_i&=& \left\{
\begin{array}{l}
a_{i,1}x_1^i+a_{i,2}x_2+\ldots+a_{i,n}x_n\mid a_{i,1}\mbox{ is unit element of }R,\mbox{ and}\\
\qquad a_{i,j} \mbox{ is either zero or a unit element of } R, \mbox{ for } 2\leq j\leq n
\end{array}\right\}
\end{eqnarray}
and if $n$ is odd, define 
\begin{eqnarray} \nonumber
U_h&=& \left\{
\begin{array}{l}
a_{h,1}x_1^h+a_{h,2}x_2+\ldots+a_{h,n}x_n\mid a_{h,1}\mbox{ is unit element of }R,\mbox{ and}\\
\qquad a_{h,j} \mbox{ is either zero or a unit element of } R, \mbox{ for } 2\leq j\leq n
\end{array}\right\}
\end{eqnarray}
and if $n$ is even, define $U_h=\emptyset$.\\
Next we define, 
\begin{eqnarray} \nonumber
V_1&=& \left\{\begin{array}{l}
b_{1,1}x_1^{n-1}+b_{1,2}x_2+\ldots+b_{1,n}x_n\mid b_{1,j}\mbox{ is either zero or unit}\\
\qquad\mbox{element of } R,\mbox{ for } 1\leq j\leq n
\end{array}\right\}.
\end{eqnarray}
and for $2\leq i\leq h$, 
\begin{eqnarray} \nonumber
V_i&=& \left\{\begin{array}{l}
b_{i,1}x_1^{n-i}+b_{i,2}x_2+\ldots+b_{i,n}x_n\mid b_{i,1}\mbox{ is unit element of }R, \mbox{ and}\\
\qquad b_{i,j}\mbox{ is either zero or unit element of } R,\mbox{ for } 2\leq j\leq n
\end{array}\right\}.
\end{eqnarray}

Then clearly, $\mathop\bigcup\limits_{i=1}^h U_i$ is an independent set and $\mathop\bigcup\limits_{i=1}^h V_i$ is a complete subgraph of $\Gamma(R)$. Also they satisfy the definition of nested split graph and hence the graph is threshold. \qed
\end{proof}

\begin{proposition} If $M$ is the maximal ideal generated by $\{x_1,x_2,\ldots, x_k\}$, where $k\geq 2$ such that $x_1x_2=0$, $x_1^2\neq 0\neq x_2^2$, then $\Gamma(R)$ is not threshold. 
\end{proposition}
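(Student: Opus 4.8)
The plan is to exhibit a forbidden induced subgraph — specifically a copy of $C_4$, $P_4$ or $2K_2$ on four vertices — inside $\Gamma(R)$, and then invoke the Chv\'atal--Hammer characterization (item (2) of the theorem quoted above) to conclude that $\Gamma(R)$ is not threshold. The natural candidate vertices to work with are the two generators $x_1,x_2$ together with one or two of their powers, since the hypotheses single out $x_1^2\ne 0$, $x_2^2\ne 0$ and $x_1x_2=0$. First I would record the adjacencies we already know for free: $x_1$ is \emph{not} adjacent to itself-as-a-vertex issue aside, $x_1x_2=0$ gives the edge $\{x_1,x_2\}$; and $x_1^2\ne 0\ne x_2^2$ says $x_1$ is not adjacent to $x_1$ in the sense that no ``loop'' is forced, but more usefully it tells us something about $x_1^2$ as a separate vertex. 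The cleanest four-vertex set to try is $\{x_1,\,x_2,\,x_1^2,\,x_2^2\}$ (assuming these are four distinct zero-divisors; the degenerate coincidences need a short separate check, and in those cases one falls back on $\{x_1, x_2, x_1^2, x_1^2+x_2\}$ or similar).

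The key computations are then: (i) $x_1\cdot x_2 = 0$, so $\{x_1,x_2\}$ is an edge; (ii) $x_1\cdot x_1^2 = x_1^3$ and $x_2\cdot x_2^2 = x_2^3$ — here I would need to argue these are nonzero, or at least arrange the construction so that the relevant non-edges hold; (iii) $x_1^2\cdot x_2 = x_1(x_1x_2) = 0$, so $\{x_1^2, x_2\}$ is an edge, and symmetrically $\{x_2^2, x_1\}$ is an edge; (iv) $x_1^2 \cdot x_2^2 = 0$ again by $x_1x_2=0$, giving the edge $\{x_1^2,x_2^2\}$. If $x_1^3 \ne 0$ and $x_2^3\ne 0$, the induced subgraph on $\{x_1, x_1^2, x_2, x_2^2\}$ then has edges $x_1x_2$, $x_1 x_2^2$, $x_1^2 x_2$, $x_1^2 x_2^2$ and non-edges $x_1 x_1^2$, $x_2 x_2^2$ — that is exactly $C_4$, and we are done. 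So the real content is handling the case where a cube vanishes. If, say, $x_1^3 = 0$ but $x_1^2\ne 0$, then $x_1^2\in M^2\setminus M^3$ behaves like a ``top'' element; I would then instead use a set such as $\{x_1, x_2, x_2^2, x_1^2\}$ but reclassify which pairs are edges, or introduce a unit-twisted element like $x_1 + x_2^2$ to break the symmetry and force a $P_4$ or $2K_2$. The main obstacle I anticipate is precisely this bookkeeping: showing that in \emph{every} configuration consistent with the hypotheses one can locate four vertices inducing one of the three forbidden graphs, since the ideal $M$ may be generated by more than two elements ($k\ge 2$) and there could be extra relations among $x_3,\dots,x_k$ and among the $x_i$ that I cannot control. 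I would manage this by working entirely inside the subring-or-rather the substructure generated by $x_1$ and $x_2$ and their powers, noting that the other generators only add edges (more zero products) and never destroy the non-edges $x_1 x_1^2 \ne 0$, $x_2 x_2^2\ne 0$ that I rely on — so the induced subgraph on my chosen four vertices is determined, and the forbidden configuration persists.
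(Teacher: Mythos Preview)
Your strategy of exhibiting a forbidden induced subgraph is sound in principle, but the execution has a real gap. The non-edges you ``rely on'' at the end, namely $x_1\cdot x_1^2\ne 0$ and $x_2\cdot x_2^2\ne 0$, are \emph{not} consequences of the hypotheses: only $x_1^2\ne 0$ and $x_2^2\ne 0$ are assumed, and it is perfectly possible that $x_1^3=0=x_2^3$. You notice this yourself midway through, but then in the final paragraph you revert to assuming those non-edges hold. When both cubes vanish, the set $\{x_1,x_2,x_1^2,x_2^2\}$ induces $K_4$, not $C_4$, and your proposed fallback elements like $x_1+x_2^2$ are not analysed (for instance $(x_1+x_2^2)x_2=x_2^3$, which may or may not be zero). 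So as written the argument does not close; completing it would require a genuine case split on which of $x_1^3,x_2^3$ vanish, together with a careful choice of four vertices in each case and a verification that they are pairwise distinct.

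The paper bypasses all of this. It simply applies the annihilator criterion of Theorem~\ref{t:threshold}(b): since $x_1x_2=0$ and $x_2^2\ne 0$ we have $x_1\in\Ann(x_2)\setminus\Ann(x_1)$, and symmetrically $x_2\in\Ann(x_1)\setminus\Ann(x_2)$; hence $\Ann(x_1)$ and $\Ann(x_2)$ are incomparable under inclusion, which by Theorem~\ref{t:threshold}(b) already forces $\Gamma(R)$ to be non-threshold. That is a two-line argument with no case analysis. Your approach is essentially re-deriving (a special instance of) that theorem from scratch; it can be made to work, but the annihilator formulation packages the forbidden-subgraph search once and for all and is the cleaner route here.
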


\begin{proof}
Clearly, $x_1\in \Ann(x_2)\backslash \Ann(x_1)$ and $x_2\in \Ann(x_1)\backslash \Ann(x_2)$ and hence $\Ann(x_1)$ is not a subset of $\Ann(x_2)$ and $\Ann(x_2)$ is not a subset of $\Ann(x_1)$. Thus $\Gamma(R)$ is not threshold, by Theorem \ref{t:threshold}. \qed
\end{proof}

\section{The zero-divisor graphs of local rings with countable cardinality are universal}

It is natural to wonder about embedding infinite graphs in zero-divisor graphs
of infinite rings. Here we consider the countable case.

Our tool will be the \emph{Rado graph}, or \emph{countable random graph}
$\mathcal{R}$: this was first explicitly constructed by Rado, but about the
same time Erd\H{o}s and R\'enyi proved that if a countable graph was selected
by choosing edges independently with probability $\frac{1}{2}$ from the
$2$-subsets of a countably infinite set, the resulting graph is isomorphic
to $\mathcal{R}$ with probability~$1$. Among many beautiful properties of
this graph (for which we refer to \cite{rg}), we require the following:
\begin{itemize}
\item $\mathcal{R}$ is the unique countable graph having the property that,
given any two finite disjoint sets $U$ and $V$ of vertices, there is a vertex
$z$ joined to every vertex in $U$ and to none in $V$.
\item Every finite or countable graph is embeddable in $\mathcal{R}$ as an
induced subgraph.
\end{itemize}
We refer to \cite{cl} for terminology and results on Model Theory, in
particular the Compactness and L\"owenheim--Skolem theorems.

\begin{thm}
There is a countable local ring with unity having the property that
every finite or countable graph is an induced subgraph of its zero-divisor
graph.
\end{thm}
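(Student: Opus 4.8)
The plan is to build the desired ring as a "limit" of the finite local rings produced in the proof of Theorem~\ref{t:loc_universal}, using a compactness/Löwenheim--Skolem argument to pass to a countable model whose zero-divisor graph contains $\mathcal{R}$. First I would recall that, for each finite graph $G$ on vertices $v_1,\dots,v_n$, the construction in Theorem~\ref{t:loc_universal} gives a finite local ring $R_G = F[x_1,\dots,x_n]/I_G$ in which $x_ix_j = 0$ precisely when $\{v_i,v_j\}\in E(G)$, so that $G$ embeds as an induced subgraph of $\Gamma(R_G)$. The key structural feature to isolate is that the relevant relations are: all degree-$3$ monomials vanish, and $x_ix_j=0$ iff $v_i\sim v_j$; in particular the $x_i$ are nilpotent of index $\le 3$ and generate the maximal ideal. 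This is a first-order description (over the fixed prime field $F$) of "a countable family of elements behaving like an independent set of vertices realising a prescribed adjacency pattern inside a local ring with $M^3=0$".

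Next I would set up a first-order theory $T$ in the language of rings augmented by countably many constants $c_0, c_1, c_2, \dots$ (one for each intended vertex), whose axioms assert: the constants are distinct nonzero elements; every product of three of them (with repetition) is zero; and for the adjacency: I cannot put the Rado adjacency into $T$ directly because $\mathcal{R}$ is not finitely axiomatisable, so instead I would take as axioms, for each finite graph $H$ and each finite tuple of constants, the statement that one can find a further constant realising the "one-point extension" property of $\mathcal{R}$ over that tuple — i.e. for all disjoint finite index sets $U,V$ there is an index $z$ with $c_zc_u=0$ for $u\in U$ and $c_zc_u\ne 0$ for $u\in V$. Every finite fragment of $T$ mentions only finitely many constants and finitely many adjacency constraints, hence is realised in one of the finite rings $R_G$ of Theorem~\ref{t:loc_universal} for a suitable finite $G$ (choose $G$ to contain the required finite graph as an induced subgraph, which we can by Theorem~\ref{t:loc_universal}, and large enough that the one-point extensions demanded by the fragment are available — each demands a new vertex adjacent to exactly a prescribed finite set, which is easy to add to $G$). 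By Compactness $T$ has a model, and by Löwenheim--Skolem a countable model $R$; the subring generated by the constants together with $1$ over $F$ is countable, and the maximal ideal is the set of elements with zero "constant part", which is nilpotent of bounded index, so $R$ (or this countable subring) is a countable local ring. The constants then induce a subgraph of $\Gamma(R)$ satisfying the defining extension property of the Rado graph, so that subgraph is $\mathcal{R}$, and hence every finite or countable graph embeds as an induced subgraph of $\Gamma(R)$.

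The main obstacle, and the point needing the most care, is ensuring that the adjacency relation stays exactly right in the limit — i.e. that $c_ic_j=0$ really is equivalent to $v_i\sim v_j$ in the model, with no accidental collapses ($c_ic_j=0$ when it shouldn't be) and no accidental separations. The "should not be zero" direction is a positive existential consequence we can axiomatise as an inequation $c_ic_j\ne 0$ for each intended non-edge, and inequations are preserved in passing to submodels and are consistent in each finite fragment by the basis computation in Theorem~\ref{t:loc_universal} (the relevant monomials are basis elements of $R_G$); the "should be zero" direction is an equation, equally unproblematic. The more delicate issue is the one-point extension property: one must check that adding a new vertex to a finite graph $G$, adjacent to an arbitrary prescribed subset of the existing vertices and to none of the others, keeps the construction of Theorem~\ref{t:loc_universal} consistent and still induces the correct graph — but this is immediate, since that theorem applies to \emph{every} finite graph, including the enlarged one. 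Finally I would note (this is a routine verification) that the elements of $R$ with zero constant term still form a nil ideal of bounded nilpotency class $3$ and that everything outside it is a unit, so the limit ring is genuinely local; then the Rado-graph extension property for the constants, transported through the equivalence $c_ic_j=0\iff$ adjacency, finishes the proof.
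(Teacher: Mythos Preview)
Your approach is close to the paper's second proof, which also uses Compactness and L\"owenheim--Skolem, but there is a genuine wobble in your axiomatisation. The scheme ``for all disjoint finite index sets $U,V$ there is an index $z$ with $c_zc_u=0$ for $u\in U$ and $c_zc_v\ne0$ for $v\in V$'' quantifies over constant \emph{symbols}, not over elements of the model; it is a meta-statement about the theory, not a first-order axiom in it. The paper sidesteps this by using a unary predicate $S$ rather than constants: its axioms say that for any $m+n$ elements satisfying $S$ there is a further element satisfying $S$ with the prescribed zero/nonzero products --- which is genuinely first-order. Alternatively, and closer to what you drift into later when you speak of adding the inequations $c_ic_j\ne0$ for non-edges, you can simply fix one concrete enumeration of $\mathcal{R}$ and for each pair $(i,j)$ add the single axiom $c_ic_j=0$ or $c_ic_j\ne0$ according to the adjacency in $\mathcal{R}$. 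Your concern that $\mathcal{R}$ is ``not finitely axiomatisable'' is a red herring: you are not axiomatising the class of Rado graphs, only pinning down one fixed copy, and Compactness has no trouble with infinitely many atomic (in)equations.

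It is also worth noting that the paper gives a much shorter direct first proof avoiding model theory entirely: take $R=F[x_i:i\in V(\mathcal{R})]/S$, with $S$ generated by all degree-$3$ monomials together with $x_ix_j$ for each edge $\{i,j\}$ of $\mathcal{R}$, i.e.\ literally the construction of Theorem~\ref{t:loc_universal} with countably many variables. The same basis computation shows the $x_i$ induce $\mathcal{R}$, and the ring is visibly countable and local. Your Compactness route essentially rebuilds this ring as a subring of a nonconstructive model; that works, but is more circuitous. On the credit side, your explicit check that the subring generated by the constants is local (via $M^3=0$ and $1+M$ consisting of units) is a point the paper's own second proof glosses over.
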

\begin{proof}
It suffices to show that the Rado graph $\mathcal{R}$ can be embedded in the
zero-divisor graph of a countable ring, since every finite or countable
graph is an induced subgraph of the Rado graph.

We give two proofs of this. The first is simple and direct, using the
method we used in Theorem~\ref{t:loc_universal}. The second is non-constructive
but shows a technique which we hope will be of wider use.\\ 

\noindent \textbf{First proof.} Let $F$ be a field (for simplicity the field with
two elements). Let $R=F[X]/S$, where the set $X$ of indeterminates is bijective
with the vertex set of $\mathcal{R}$ (with $x_i$ corresponding to vertex $i$),
and $S$ is the ideal generated by all homogeneous polynomials of degree~$3$
in these variables together with all products $x_ix_j$ for which $\{i,j\}$ is
an edge of $\mathcal{R}$. Just as in the proof of Theorem~\ref{t:loc_universal},
the induced subgraph on the set $\{x_i+S:i\in V(\mathcal{R})\}$ induces a
subgraph of the zero-divisor graph of $R$.

The ring $R$ has the properties that it is a local ring (though its maximal
ideal is not finitely generated) and its automorphism group contains the
automorphism group of $\mathcal{R}$.\\

\noindent \textbf{Second proof.} We use basic results from model theory.
We take the first-order language of rings with unity together with an
additional unary relation $S$. Now consider the
following set $\Sigma$ of first-order sentences:
\begin{itemize}
\item[(a)] the axioms for a commutative ring with unity;
\item[(b)] the statement that every element of $S$ is a (non-zero)
zero-divisor;
\item[(c)] for each pair $(m,n)$ of non-negative integers, the sentence
stating that for any given $m+n$ elements $x_1,\ldots,x_m,y_1,\ldots,y_n$, all
satisfying $S$, there exists an element $z$ such that $z$ satisfies $S$,
that $x_iz=0$ for $i=1,\ldots,m$, and that $y_jz\ne0$ for $j=1,\ldots,n$.
\end{itemize}

We claim that any finite set of these sentences has a model. Any
finite subset of the sentences in (c) are satisfied in some finite graph
(taking ``product zero'' to mean ``adjacent''), with $S$ satisfied by the
vertices used in the embedding: indeed, a sufficiently large finite random
graph will have this property. By our earlier results, this finite graph is
embeddable as an induced subgraph in the zero-divisor graph of some finite
commutative ring with unity.

By the First-Order Compactness Theorem, the entire set $\Sigma$ has a model $R$.
This says that the set of ring elements satisfying $S$ induces a subgraph
isomorphic to Rado's graph $\mathcal{R}$ in the zero-divisor graph of $R$. (The
sentences under (c) are first-order axioms for $\mathcal{R}$.) But
$\mathcal{R}$ contains every finite or countable graph as an induced subgraph.

Now the downward L\"owenheim--Skolem theorem guarantees that there is a
countable ring whose zero-divisor graph contains $\mathcal{R}$, and hence all
finite and countable graphs, as induced subgraphs. 
\end{proof}
\begin{prob} Is there a local ring whose maximal ideal is finitely
generated as ideal and whose zero-divisor graph embeds the Rado graph as an
induced subgraph?
\end{prob}

\begin{prob} Find the smallest number $N$ (in terms of $n$ and $m$)
such that, if $G$ is a graph with $n$ vertices and $m$ edges, then $G$ is
an induced subgraph of the zero divisor graph of a ring (commutative with
unity) of order at most $N$.
\end{prob}
Let $f(n,m)$ be this number. Our construction using Boolean rings shows that $N\le 2^p$, where $p$ is the
least number of points in a representation of $G$ as an intersection graph.
Moreover, we constructed an intersection representation with $p=m+a+b$, 
where $a$ and $b$ are the numbers of isolated vertices and edges in $G$. 
This gives an upper bound for $f(n,m)$. 
\begin{prob}
Is this best possible?
\end{prob}
We can ask similar questions for subclasses of rings (such as local rings),
or for variants of the zero-divisor graph.

\section{Other graphs from rings}

There are several natural classes of graphs containing the threshold graphs.
These include split graphs (defined earlier), chordal graphs (containing no
induced cycle of length greater than~$3$), cographs (containing no induced
path on four vertices) and perfect graphs (containing no induced odd cycle
of length at least~$5$ or complement of one). For each of these classes
$\mathcal{C}$, we can ask:

\begin{prob} For which commutative rings with unity does the 
zero-divisor graph belong to $\mathcal{C}$?
\end{prob}

Another very general problem is to examine the induced subgraphs of various
generalizations of zero-divisor graphs, such as the extended zero-divisor
graphs and the trace graphs~\cite{Trace,Siva1,Siva2,Siva3,Siva4}.

As a contribution to this problem, here is an example of a kind of universality
question that can be asked when we have graphs defined from algebraic
structures where one is a subgraph of the other. The pattern for this theorem 
is \cite[Theorem 5.9]{c22}, the analogous result for the enhanced power graph
and commuting graph of a group.

Let $A$ be a commutative ring with unity. We define the zero-divisor graph
of $A$ to have as vertices all the non-zero elements of $A$, two vertices
$a$ and $b$ joined if $ab=0$. (This is not the usual definition since we
don't restrict just to zero-divisors: vertices which are not zero-divisors are
isolated. This doesn't affect our conclusion.) Given a positive integer $m$, we
define the $m$-dimensional dot product graph to have vertices all the
non-zero elements of $A^m$ with two vertices $(a_1,a_2,\ldots,a_m)$ and
$(b_1,b_2,\ldots,b_m)$ joined if $a.b=0$, where
\[a \cdot b=a_1b_1+a_2b_2+\cdots+a_mb_m.\]

Note that $A^m$ is a ring, with the product $*$ given by
\[a*b=(a_1b_2,a_2b_2,\ldots,a_mb_m).\]
It is clear that $a*b=0$ implies $a.b=0$, so the zero-divisor graph of $A^m$
is a spanning subgraph of the $m$-dimensional dot product graph of $A$.

We prove the following:

\begin{thm}
Take the complete graph on a finite set $X$, with the edges coloured red, green
and blue in any manner whatever. Then there is a ring $A$ and an embedding of
$X$ into $A^2$ such that
\begin{itemize}\itemsep0pt
\item the red edges are edges of the zero-divisor graph of $A^2$;
\item the green edges are edges of the $2$-dimensional dot product graph of $A$
but not of the zero-divisor graph of $A^2$;
\item the blue edges are not edges of the $2$-dimensional dot product graph
of $A$.
\end{itemize}
\label{t:three}
\end{thm}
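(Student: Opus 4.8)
The plan is to build $A$ as a product of local rings, one factor for each color class of edges incident to a fixed structure, so that the three‑valued adjacency relation is controlled coordinatewise. I would start from the red subgraph $X_{\mathrm{red}}$ (the spanning subgraph of the complete graph on $X$ whose edges are the red ones). By Theorem~\ref{t:loc_universal}, $X_{\mathrm{red}}$ embeds as an induced subgraph of the zero‑divisor graph of some finite local ring $B$; fix such an embedding $v\mapsto \beta_v\in B$. The first coordinate of the target embedding into $A^2$ will be designed so that the $*$‑product of two images is zero precisely on red edges, using $B$ in a suitable way while keeping non‑red pairs $*$‑nonzero.

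Next I would handle the dot product. Recall $a\cdot b = a_1b_1+a_2b_2$ and $a*b=(a_1b_1,a_2b_2)$; so $a*b=0$ forces $a\cdot b=0$, but $a\cdot b=0$ can also happen through cancellation $a_1b_1=-a_2b_2\neq 0$. The green edges are exactly those where we want this cancellation to occur (dot zero but $*$ nonzero), and the blue edges are those where we want $a\cdot b\neq 0$. So I would take $A=B\times C$ for an auxiliary ring $C$, writing the image of $v$ as $\big((\beta_v,\gamma_v),(\beta'_v,\gamma'_v)\big)\in A^2$, and arrange:
\begin{itemize}
\item on red edges: $\beta_v\beta_w=0$ and $\gamma_v\gamma_w=0$ and $\beta'_v\beta'_w=0$ and $\gamma'_v\gamma'_w=0$ (so $*$‑product is $0$);
\item on green edges: $\beta_v\beta_w+\beta'_v\beta'_w=0$ with $\beta_v\beta_w\neq 0$ (cancellation in the $B$‑part of the dot product), and similarly the $C$‑parts sum to zero, while the $*$‑product is nonzero;
\item on blue edges: the total dot product is a nonzero element of $A$.
\end{itemize}
The natural choice is to let $C$ itself be (a power of) a ring produced by the Boolean/intersection‑graph machinery of Section~2, or simply another copy of a polynomial quotient as in Theorem~\ref{t:loc_universal}, used to separate the green and blue classes; cancellation can be forced by putting $\beta'_v=-\beta_w$‑type relations, which is cleanest if we instead present $A$ as a quotient of a polynomial ring over $\mathbb{F}_p$ by an ideal that simultaneously imposes all the degree‑$3$ vanishing (to make everything local and nilpotent), the products $x_ix_j$ on red edges, and the linear‑in‑products relations $x_iy_j - x_jy_i$ (or $x_iy_j+x_jy_i$) on green edges. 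Then "blue" is simply the default: no relation is imposed, so $a\cdot b$ is a nonzero basis element.

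Concretely, I would mimic the polynomial‑quotient construction: let $R=\mathbb{F}_p[\,x_v,x'_v : v\in X\,]$ modulo the ideal $I$ generated by (i) all homogeneous cubics in the $x_v,x'_v$; (ii) $x_vx_w$ and $x'_vx'_w$ for red edges $\{v,w\}$; (iii) $x_vx_w + x'_vx'_w$ for green edges $\{v,w\}$ — and nothing for blue edges. Set $A$ to be this quotient, which is a finite local ring by the argument of Theorem~\ref{t:loc_universal}. Map $v\in X$ to $\big((x_v \bmod I,\,0),\,(x'_v\bmod I,\,0)\big)$ — actually simpler: embed $v\mapsto (x_v, x'_v)\in A^2$ directly, taking $A$ as above. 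Then $a*b = (x_vx_w,\,x'_vx'_w)$, which is $0$ iff both products vanish iff $\{v,w\}$ is red; and $a\cdot b = x_vx_w + x'_vx'_w$, which is $0$ iff $\{v,w\}$ is red or green, and is a nonzero basis element iff $\{v,w\}$ is blue. The key point to verify — and the main obstacle — is that the degree‑$2$ monomials and the green combinations remain linearly independent in $A$ exactly as claimed, i.e. that $I$ contains no unintended degree‑$2$ relations; this is a Gröbner‑basis / explicit‑$\mathbb{F}_p$‑basis check of the same flavor as the basis $\{1,x_1,\dots\}\cup\{x_ix_j:\{v_i,v_j\}\notin E\}$ used in Theorem~\ref{t:loc_universal}, but now bookkeeping three edge types and two variable families. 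Once that basis is exhibited, the three bulleted properties are immediate and the embedding of $X$ into $A^2$ is the required one. \qed
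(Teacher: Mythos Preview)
After the exploratory $B\times C$ detour, your concrete construction is correct: with $A=\mathbb{F}_p[x_v,x'_v:v\in X]/I$, where $I$ is generated by all cubics together with $x_vx_w,\,x'_vx'_w$ for red $\{v,w\}$ and $x_vx_w+x'_vx'_w$ for green $\{v,w\}$, and with the embedding $v\mapsto(x_v,x'_v)\in A^2$, the quadratic relations attached to distinct edges involve pairwise disjoint sets of monomials, so the basis check you flag is immediate and the three bulleted conditions follow exactly as you say.

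This is, however, a genuinely different route from the paper's. The paper takes $A$ to be the \emph{Boolean ring} on the set $P$ of non-red (that is, blue or green) edges, and sends $v$ to the pair $(S(v),T(v))$, where $S(v)$ is the set of green edges through $v$ and $T(v)$ the set of non-red edges through $v$. Since multiplication is intersection and addition is symmetric difference, a green edge $e=\{v,w\}$ gives $S(v)\cap S(w)=T(v)\cap T(w)=\{e\}$, hence $*$-product $(\{e\},\{e\})\neq0$ but dot product $\{e\}\sd\{e\}=0$; a blue edge gives $S(v)\cap S(w)=\emptyset$ and $T(v)\cap T(w)=\{e\}$, so dot product $\{e\}\neq0$; a red edge gives both intersections empty. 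The Boolean argument is shorter and avoids any linear-algebra bookkeeping, at the price of a small preprocessing step (adjoining up to four dummy vertices so that the green and blue subgraphs have no isolated vertices or edges, ensuring $v\mapsto(S(v),T(v))$ is injective). Your polynomial-quotient approach needs no such adjustment and, more interestingly, yields a \emph{local} ring $A$, which in fact answers affirmatively the first item of the open problem the paper poses immediately after this theorem.
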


\begin{proof}
First, by enlarging $X$ by at most four points joined to all others with
blue or green edges, we can assume that neither the blue nor the green 
subgraphs have isolated vertices or edges.

Let $P$ be the set of blue or green edges. For each vertex $v\in X$, define
a pair $(S(v),T(v))$ of subsets of $P$ by the rule that $S(v)$ is the set
of green edges containing $v$, while $T(v)$ is the set of blue or green edges
containing $v$. The assumption in the previous paragraph shows that the map
$\theta:v\mapsto(S(v),T(v))$ is one-to-one.

Now let $A$ denote the Boolean ring on $P$. Then $\theta$ is an embedding of
$X$ into $A\times A\setminus\{0\}$. We claim that this has the required
property.
\begin{itemize}
\item Suppose that $e=\{v,w\}$ is a red edge. Then $S(v)\cap S(w)=\emptyset$
and $T(v)\cap T(w)=\emptyset$; so $\theta(v)*\theta(w)=0$, whence 
$\theta(v)$ and $\theta(w)$ are joined in the zero divisor graph of $A^2$.
\item Suppose that $e=\{v,w\}$ is green. Then $S(v)\cap S(w)=\{e\}$ and
$T(v)\cap T(w)=\{e\}$; so $\theta(v)*\theta(w)\ne0$ but
$\theta(v).\theta(w)=0$. Thus $\theta(v)$ and $\theta(w)$ are joined in the
dot product graph but not the zero divisor graph.
\item Suppose that $e=\{v,w\}$ is blue. Then $S(v)\cap S(w)=\emptyset$
and $T(v)\cap T(w)=\{e\}$, so $\theta(v).\theta(w)=\{e\}\ne0$. So $\theta(v)$
and $\theta(w)$ are not joined in the dot product graph.
\end{itemize}
The theorem is proved.\qed
\end{proof}

\begin{rem}
This theorem has several consequences:
\begin{itemize}
\item By ignoring the distinction between green and blue, we have another
construction showing the universality of the zero-divisor graphs of rings.
\item By ignoring the distinction between red and green, we have shown the
universality of the $2$-dimensional dot product graphs of rings.
\item By ignoring the distinction between red and blue, we have shown that
the graphs obtained from the $2$-dimensional dot product graph of $A$ by
deleting the edges of the zero-divisor graph of $A^2$ are universal.
\end{itemize}
\end{rem}
The theorem suggests several questions:
\begin{prob}
\begin{itemize}\itemsep0pt
\item Can we restrict the ring $A$ to a special class such as local rings?
\item Can we prove similar results for other pairs of graphs?
\item Can we prove similar results for more than two graphs?
\end{itemize}
\end{prob}

\noindent \textbf{Acknowledgment.} 
\small{This work began in the Research Discussion on Groups and Graphs, organised by
Ambat Vijayakumar and Aparna Lakshmanan at CUSAT, Kochi, to whom we express
our gratitude. We are also grateful to the International Workshop on Graphs
from Algebraic Structures, organised by Manonmaniam Sundaranar University and
the Academy of Discrete Mathematics and Applications, for the incentive to
complete the work and the inspiration for Theorem~\ref{t:three}.} For T. Tamizh Chelvam, this research was supported by the University Grant Commissions Start-Up Grant, Government of India grant No. F. 30-464/2019 (BSR) dated 27.03.2019, while for the last author, it was supported by CSIR Emeritus Scientist Scheme (No. 21 (1123)/20/EMR-II) of  Council of Scientific and Industrial Research, Government of India. Peter J. Cameron acknowledges the Isaac Newton Institute for Mathematical Sciences, Cambridge, for support and hospitality during the programme \textit{Groups, representations and applications: new perspectives} (supported by \mbox{EPSRC} grant no.\ EP/R014604/1), where he held a Simons Fellowship.

\end{document}